\title[The APS index and domain-wall fermion Dirac operators]{The Atiyah-Patodi-Singer index and domain-wall fermion Dirac operators}
\author[H. Fukaya]{Hidenori Fukaya}
\author[M. Furuta]{Mikio Furuta}
\author[S. Matsuo]{Shinichiroh Matsuo}\thanks{Corresponding author: Shinichiroh Matsuo, shinichiroh@math.nagoya-u.ac.jp}
\author[T. Onogi]{Tetsuya Onogi}
\author[S. Yamaguchi]{Satoshi Yamaguchi}
\author[M. Yamashita]{Mayuko Yamashita}
\address[HF, TO, and SY]{%
  Department of Physics \\
  Osaka University \\
  Osaka, Japan
}
\email{hfukaya@het.phys.sci.osaka-u.ac.jp}
\email{onogi@het.phys.sci.osaka-u.ac.jp}
\email{yamaguch@het.phys.sci.osaka-u.ac.jp}
\address[MF]{%
  Graduate School of Mathematical Sciences \\
  The University of Tokyo \\
  Tokyo, Japan
}
\email{furuta@ms.u-tokyo.ac.jp}
\address[SM]{%
  Graduate School of Mathematics \\
  Nagoya University \\
  Nagoya, Japan
}
\email{shinichiroh@math.nagoya-u.ac.jp}
\urladdr{https://www.math.nagoya-u.ac.jp/~shinichiroh/}
\address[MY]{
  Research Institute for Mathematical Sciences \\
  Kyoto University \\
  Kyoto, Japan
}
\email{mayuko@kurims.kyoto-u.ac.jp}
\theoremstyle{plain}
\newtheorem{theorem}{Theorem}
\newtheorem{proposition}[theorem]{Proposition}
\newtheorem{lemma}[theorem]{Lemma}
\theoremstyle{remark}
\newtheorem{remark}[theorem]{Remark}
\newcommand{\superemph}[1]{\textcolor{red}{#1}}
\DeclarePairedDelimiterX{\innerProduct}[2]{(}{)}{#1,#2}
\DeclarePairedDelimiterX{\bracket}[2]{\langle}{\rangle}{#1,#2}
\DeclarePairedDelimiter{\abs}{\lvert}{\rvert}
\DeclarePairedDelimiter{\norm}{\lVert}{\rVert}
\newcommand{\restrictedTo}[2]{\left. #1 \right|_{#2}}
\newcommand{\Z}{\mathbb{Z}}
\newcommand{\R}{\mathbb{R}}
\newcommand{\C}{\mathbb{C}}
\DeclareMathOperator{\Hom}{Hom}
\DeclareMathOperator{\Endo}{End}
\DeclareMathOperator{\Image}{Im}
\DeclareMathOperator{\Ker}{Ker}
\DeclareMathOperator{\id}{id}
\DeclareMathOperator{\tr}{tr}
\newcommand{\transpose}[1]{#1^T}
\newcommand{\Spin}{\mathrm{Spin}}
\DeclareMathOperator{\realpart}{Re}
\DeclareMathOperator{\sign}{sign}
\DeclareMathOperator{\supp}{supp}
\DeclareMathOperator{\spectrum}{Spec}
\newcommand{\gradingOperator}{\Gamma}
\DeclareMathOperator{\ASindex}{Ind}
\DeclareMathOperator{\APSindex}{Ind_{APS}}
\DeclareMathOperator{\signfunction}{sgn}
\newcommand{\kappaAS}{\widehat{\kappa}_{\mathrm{AS}}}
\newcommand{\kappaAPS}{\widehat{\kappa}_{\mathrm{APS}}}
\newcommand{\kappaCorner}{\widehat{\kappa}_{\llcorner }}
\newcommand{\DiracOperatorOnCylinderAS}{\widehat{D}}
\newcommand{\DiracOperatorOnCylinderAPS}{\widehat{\mathcal{D}}}
\newcommand{\kappaAPSsmooth}{\kappaAPS^{\mathrm{sm}}}
\newcommand{\smoothDiracOperatorOnCylinderAPS}{\DiracOperatorOnCylinderAPS^{\mathrm{sm}}}
\newcommand{\DiracOperatorOnZ}{L}
\newcommand{\DiracOperatorOnY}{A}
\newcommand{\firstPositiveEigenvalueOfY}{\lambda_{\DiracOperatorOnY}}
\newcommand{\XPlusHat}{X_{\mathrm{cyl}}}
\newcommand{\DiracOperatorOnXPlusHat}{D_{\mathrm{cyl}}}
\newcommand{\SonXPlusHat}{S_{\mathrm{cyl}}}
\newcommand{\firstPositiveEigenvalueOfXPlusHat}{\lambda_{\DiracOperatorOnXPlusHat}}
\tikzset{
  XPlus/.pic={
    \draw (0,0.5) -- (4.5,0.5) arc [start angle=150, end angle=-150, radius=1] -- (0,-0.5);
    \node at (5.366,0) {$X_+$};
  },
  XMinus/.pic={
    \draw (0,0.5) -- (-4.5,0.5) arc [start angle=30, end angle=330, radius=1] -- (0,-0.5);
    \node at (-5.366,0) {$X_-$};
  },
  Y/.pic={
    \draw [ultra thick] (0,-0.5) -- (0,0.5);
    \node at (0,0) [right]{$Y$};
  },
  finiteNeckY/.pic={
    \filldraw [very nearly transparent] (-2,0.5) -- (2,0.5) -- (2,-0.5) -- (-2,-0.5);
    \draw [ultra thick] (0,-0.5) -- (0,0.5);
    \draw
      [->] (-2.2,0) -- (2.2,0) node [right]{$u$};
    \draw
      (-2,0.06) -- (-2,-0.06) node [below]{$-4$}
      (2,0.06) -- (2,-0.06) node [below]{$4$};
  },
  infiniteHalfCylinderY/.pic={
    \draw [ultra thick] (0,-0.5) -- (0,0.5);
    \draw (-7,0.5) -- (0,0.5) -- (0,-0.5) -- (-7,-0.5);
    \filldraw [very nearly transparent] (-7,0.5) -- (0,0.5) -- (0,-0.5) -- (-7,-0.5);
    \node at (-5.366,0) {$(-\infty,0) \times Y$};
  },
  verticalX/.pic={
    \draw
      (0,1) arc [start angle=90, end angle=-60, radius=0.25] -- (0.125,0)
      (0,-1) arc [start angle=-90, end angle=60, radius=0.25] -- (0.125,0);
    \draw [dotted]
      (0,1) arc [start angle=90, end angle=240, radius=0.25] -- (-0.125,0)
      (0,-1) arc [start angle=270, end angle=120, radius=0.25] -- (-0.125,0);
  },
  verticalXPlusHat/.pic={
    \draw (0,1) arc [start angle=90, end angle=-60, radius=0.25] -- (0.125,-2);
    \draw [dotted] (0,1) arc [start angle=90, end angle=240, radius=0.25] -- (-0.125,-2);
  },
  infiniteCylinderX/.pic={
    \draw [gray, ->] (-6.5,0) -- (6.5,0) node [right]{$\R$};
    \draw [gray, ->] (0,-1.5) -- (0,1.5) node [above]{$X$};
    \draw
      (-6,1) -- (6,1)
      (-6,-1) -- (6,-1)
      pic at (-4.5,0) {verticalX}
      pic at (4.5,0) {verticalX};
  },
  infiniteCylinderXPlusHat/.pic={
    \draw [->, gray] (-6.5,0) -- (6.5,0) node [right]{$\R$};
    \draw [->, gray] (0,-2) -- (0,1.5) node [above]{$\XPlusHat$};
    \draw
      (-6,1) -- (6,1)
      pic at (-4.5,0) {verticalXPlusHat}
      pic at (4.5,0) {verticalXPlusHat};
  },
}
\begin{document}

\begin{abstract}
  We introduce a \emph{mathematician-friendly} formulation of the \emph{physicist-friendly} derivation of the Atiyah-Patodi-Singer index.
  In a previous paper, motivated by the study of lattice gauge theory, the physicist half of the authors derived a formula expressing the Atiyah-Patodi-Singer index in terms of the eta invariant of \emph{domain-wall fermion Dirac operators} when the base manifold is a flat $4$-dimensional torus.
  In this paper, we generalise this formula to any even dimensional closed Riemannian manifolds, and prove it mathematically rigorously.
  Our proof uses a Witten localisation argument combined with a devised embedding into a cylinder of one dimension higher.
  Our viewpoint sheds some new light on the interplay among the Atiyah-Patodi-Singer boundary condition, domain-wall fermions, and edge modes.
\end{abstract}

\maketitle

\section{Introduction}

The Atiyah-Patodi-Singer index theorem~\cites{MR0397797, MR0397798, MR0397799}, a generalisation of the Atiyah-Singer index theorem to manifolds with boundary, has been attracting attention in condensed matter physics.
For example, Witten used it in~\cite{RevModPhys.88.035001} to describe the bulk-edge correspondence of symmetry-protected topological phases of matter and explain why boundary-localised modes must appear on the boundary of topological insulators.
The boundary correction term of the Atiyah-Patodi-Singer index theorem, the eta invariant, appears as the phase of the edge mode partition function, and the Atiyah-Patodi-Singer index theorem suggests the existence of the bulk topological couplings to restore time-reversal symmetry.
We refer the reader to~\cites{Gromov:2015fda, MR3628684, MR3565832, MR3557925, MR3611419, MR3951702} for related works.

It is, however, somewhat puzzling to relate the Atiyah-Patodi-Singer index and symmetry-protected topological phases of matter.
Considering the Atiyah-Patodi-Singer index, we need a $\Z_2$-grading \emph{chirality} operator; thus, we should consider massless fermions in the bulk and the \emph{non-local} Atiyah-Patodi-Singer spectral boundary condition.
In symmetry-protected topological phases of matter, by contrast, fermions are massive in the bulk and \emph{local} boundary conditions are imposed.
For example, using the Atiyah-Patodi-Singer boundary condition is justified in \cite{Witten-Yonekura}  by rotating the boundary to the temporal direction and regarding it as an intermediate state in the partition function of massive fermion systems.
In a previous paper by the physicist half of the authors~\cite{MR3873281}, we looked at the Atiyah-Patodi-Singer boundary condition in a different light of \emph{domain-wall fermion Dirac operators}.

Domain-wall fermion Dirac operators~\cites{MR779916, MR0468803, Furman-Shamir, Kaplan} are a particular class of massive Dirac operators that have zero-eigenvalue solutions concentrated on small neighbourhoods of separating submanifolds, domain walls.
Using these operators, without imposing any global boundary conditions, the physicist half of the authors gave a physically intuitive reformulation of the Atiyah-Patodi-Singer index in the previous paper~\cite{MR3873281}.
We refer the reader to~\cite{MR3858615} for a different link between the Atiyah-Patodi-Singer index theorem and domain walls.

In this paper, we will pursue our investigation of the relation between the Atiyah-Patodi-Singer index theorem and domain-wall fermion Dirac operators.
We will establish a mathematical formulation of~\cite{MR3873281} based on the embedding trick and a Witten localisation argument~\cites{MR683171, MR2361481} with a new excision theorem of the index under very weak assumptions, which will localise the index to open submanifolds.
See Section~\ref{subsection: Embeddings} and Section~\ref{section: excision} respectively.

Before stating the main theorem, we begin with a formula relating the usual Atiyah-Singer index and the eta invariant.
Let $X$ be a closed oriented Riemannian manifold with $\dim X$ even.
Let $S$ be a $\Z_2$-graded hermitian vector bundle on $X$, and $\gradingOperator_S$ its $\Z_2$-grading operator.
Let $D \colon C^{\infty}(X;S) \to C^{\infty}(X;S)$ be a first-order, formally self-adjoint, elliptic partial differential operator.
We assume that $D$ is an \emph{odd} operator in the sense that it anti-commutes with $\gradingOperator_S$.
Thus, $S$ is decomposed as a direct sum $S = S_+ \oplus S_-$, and we can write
\begin{equation}
  D = \begin{pmatrix}0 & D_- \\ D_+ & 0\end{pmatrix}
\end{equation}
in matrix form.
We define the index of the odd, self-adjoint, elliptic operator $D$ by
\begin{equation}
  \ASindex(D) := \dim \Ker D_+ - \dim \Ker D_- = \tr \big( \restrictedTo{\gradingOperator_S}{\Ker D} \big).
\end{equation}
In physics notation, $\gradingOperator_S = \gamma_5$ and $D = \gamma_5 \slashed{D}$ if $\dim X = 4$, and the index gives the chiral asymmetry of the number of independent left and right zero modes.
Fix $m > 0$, and we consider another self-adjoint, elliptic operator $D + m\gradingOperator_S$.
This is no longer an odd operator.
The eta invariant describes the overall asymmetry of the spectrum of a self-adjoint operator.
Let us recall its definition.
Let $\lambda_j$ run over the eigenvalues of $D + m \gradingOperator_S$.
Note that $\lambda_j \ne 0$ for any $j$.
The eta function of $D + m \gradingOperator_S$ is defined by
\begin{equation}
  \eta(s) := \sum_{\lambda_j} \frac{\sign \lambda_j}{\abs{\lambda_j}^s}
\end{equation}
for $s \in \C$.
This series is absolutely convergent in $\realpart(s) > \dim X$ and admits a meromorphic extension to the whole complex plane.
Atiyah-Patodi-Singer~\cite{MR0397797}*{(3.9)} showed that $\eta(s)$ is holomorphic at $s = 0$.
The special value $\eta(0)$ is called the eta invariant of the operator $D+m\gradingOperator_S$ and denoted by $\eta(D + m \gradingOperator_S)$.
The eta invariant $\eta(D - m \gradingOperator_S)$ is defined similarly.
Now we have a formula
\begin{equation}\label{eq: AS and eta}
  \ASindex (D) = \frac{\eta(D + m\gradingOperator_S) - \eta(D - m \gradingOperator_S)}{2}
\end{equation}
for any $m > 0$.
This formula might be unfamiliar to the reader; however, we can prove it easily, for example, by diagonalising $D^2$ and $\gradingOperator_S$ simultaneously.

The previous paper~\cites{MR3873281} generalised this formula~\eqref{eq: AS and eta} to handle the Atiyah-Patodi-Singer index by considering the \emph{domain-wall fermion Dirac operator}.
Let us first recall the Atiyah-Patodi-Singer index.
Let $Y \subset X$ be a separating submanifold that decomposes $X$ into two compact manifolds $X_+$ and $X_-$ with common boundary $Y$.
We assume that $Y$ has a collar neighbourhood isometric to the standard product $(-4,4) \times Y$ and satisfying $((-4,4) \times Y) \cap X_+ = [0,4) \times Y$.
The coordinate along $(-4,4)$ is denoted by $u$.
We also assume that $S$ and $D$ are standard in the following sense:
there exist a hermitian bundle $E$ on $Y$ and a bundle isometry from $\restrictedTo{S}{(-4,4) \times Y}$ to $\C^2 \otimes E$ such that, under this isometry, $\gradingOperator_S$ corresponds to $\gradingOperator \otimes \id_E$ and $D$ takes the form
\begin{equation}
  D = c \otimes \partial_u + \epsilon \otimes \DiracOperatorOnY =
  \begin{pmatrix}
    0 & \partial_u + \DiracOperatorOnY \\
    -\partial_u + \DiracOperatorOnY & 0
  \end{pmatrix},
\end{equation}
where $\DiracOperatorOnY \colon C^{\infty}(Y;E) \to C^{\infty}(Y;E)$ is a formally self-adjoint, elliptic partial differential operator.
In this paper, we will concentrate on the case when $\DiracOperatorOnY$ has no zero eigenvalues, and assume this condition.
Let $C^{\infty}(X_+;\restrictedTo{S_{\pm}}{X_+}:P_A) := \big\{ f \in C^{\infty}(X_+; \restrictedTo{S_{\pm}}{X_+}) \mid P_A(\restrictedTo{f}{Y}) = 0 \big\}$, where $P_A \colon L^2(Y;E) \to L^2(Y;E)$ denotes the spectral projection onto the span of the eigensections of $A$ with positive eigenvalues.
We define the Atiyah-Patodi-Singer index of $\restrictedTo{D}{X_+}$ by
\begin{equation}
  \APSindex(\restrictedTo{D}{X_+}) := \dim \big( \Ker D_+ \cap C^{\infty}(X_+;\restrictedTo{S_+}{X_+}:P_A) \big) - \dim \big( \Ker D_- \cap C^{\infty}(X_+;\restrictedTo{S_-}{X_+}:P_A) \big).
\end{equation}
To handle the Atiyah-Patodi-Singer index, let us next introduce the \emph{domain-wall fermion Dirac operator}.
Let $\kappa \colon X \to [-1,1]$ be a step function such that $\kappa \equiv \pm 1$ on $X_{\pm} \setminus Y$ and $\kappa \equiv 0$ on $Y$, which is sometimes called a \emph{domain-wall function}.
We call $D + m \kappa \gradingOperator_S$ the \emph{domain-wall fermion Dirac operator}.
\begin{center}
  \begin{tikzpicture}
    \draw pic {XMinus} pic {Y} pic {XPlus};
    \draw [red]
      (-7,-1.2) -- (0,-1.2) -- (0,1.2) -- (7,1.2)
      node [right] {$\kappa$};
  \end{tikzpicture}
\end{center}
In \cites{MR3873281}, when $X$ is a $4$-dimensional flat manifold, a formula
\begin{equation}\label{eq: APS and eta}
  \APSindex(\restrictedTo{D}{X_+}) = \frac{\eta(D + m\superemph{\kappa}\gradingOperator_S) - \eta(D - m \gradingOperator_S)}{2}
\end{equation}
was derived by first expanding the right hand side with the Fujikawa method~\cite{PhysRevLett.42.1195} and then identifying the result with the left hand side via the Atiyah-Patodi-Singer index theorem.
Our new approach in this paper is mathematically rigorous and reveals the direct link between them; moreover, we will prove the formula~\eqref{eq: APS and eta} for any even-dimensional Riemannian manifolds.
See Theorem~\ref{theorem: main theorem}.

As a warm-up, we will prove the formula~\eqref{eq: AS and eta} in the spirit of our proof of Theorem~\ref{theorem: main theorem}.
The reader can skip this paragraph at first reading.
Consider the cylinder $\R \times X$.
The coordinate along $\R$ is denoted by $s$.
We pull back the bundle $S$ on $X$ to $\R \times X$, which will be denoted by the same symbol.
Let $\kappaAS \colon \R \times X \to [-1,1]$ be a step function such that $\kappaAS \equiv 1$ on $(0,\infty) \times X$ and $\kappaAS \equiv -1$ on $(-\infty,0) \times X$.
\begin{center}
  \begin{tikzpicture}
    \pic {infiniteCylinderX};
    \draw [blue]
      [ultra thick] (0,-1) -- (0,1)
      node at (0,0) [below right]{$\{0\} \times X$};
    \filldraw [red]
      [very nearly transparent] (-6,1) -- (0,1) -- (0,-1) -- (-6,-1);
    \filldraw [red]
      [nearly transparent] (6,1) -- (0,1) -- (0,-1) -- (6,-1);
    \draw [red]
      node at (-3,0) [below]{$\kappaAS \equiv -1$}
      node at (3,0) [above]{$\kappaAS \equiv +1$};
  \end{tikzpicture}
\end{center}
We introduce a self-adjoint operator $\DiracOperatorOnCylinderAS_m \colon L^2(\R \times X; S \oplus S) \to L^2(\R \times X; S \oplus S)$ defined by
\begin{equation}
  \DiracOperatorOnCylinderAS_m :=
  \begin{pmatrix}
    0 & (D + m\superemph{\kappaAS}\gradingOperator_S) + \partial_s \\
    (D + m\superemph{\kappaAS}\gradingOperator_S) - \partial_s & 0
  \end{pmatrix}.
\end{equation}
We will prove in Proposition~\ref{proposition: product formula} that this is a Fredholm operator and $\ASindex(\DiracOperatorOnCylinderAS_m) = \ASindex(D)$.
We also observe that the constant term in the asymptotic expansion of the heat kernel vanishes on such an odd-dimensional manifold as $\R \times X$.
Thus, by the Atiyah-Patodi-Singer index theorem on cylinders, $\ASindex(\DiracOperatorOnCylinderAS_m)$ can be written only in terms of the eta invariant.
See the discussion around \eqref{eq: product and eta}.
Note that $D + m \kappaAS(\pm 1, \cdot) \gradingOperator_S = D \pm m \gradingOperator_S$.
Hence, we have
\begin{equation}
  \ASindex(D) = \ASindex(\DiracOperatorOnCylinderAS_m) = \frac{\eta(D + m\gradingOperator_S) - \eta(D - m \gradingOperator_S)}{2},
\end{equation}
which proves the formula~\eqref{eq: AS and eta}.

We will generalise the proof above to handle manifolds with boundary.
In the book~\cite{MR2361481}, one of the authors has modified the embedding proof of the Atiyah-Singer index theorem, using a localisation argument of Witten~\cite{MR683171} with \emph{supersymmetric harmonic oscillators}.
In this paper, we will develop another Witten localisation argument with a particular embedding constructed in Section~\ref{subsection: Embeddings} and the \emph{Jackiw-Rebbi solutions} of domain-wall fermionic Dirac operators instead of supersymmetric harmonic oscillators.
We will introduce an operator~\eqref{equation: main operator} that interpolates domain-wall fermion Dirac operators and Atiyah-Patodi-Singer operators.
Our localisation arguments will localise the index of this operator to open submanifolds.
We emphasise here that the ideas behind the proof seem more interesting than the formula~\eqref{eq: APS and eta} itself and useful in other applications.
We also remark that considering spectral flows of the family $\{ D + m \kappaAS(s,\cdot) \gradingOperator_S \}_{s \in [-1,1]}$ seems more appropriate when dealing with various symmetries such as in the ten-fold way of topological insulators~\cite{Kitaev}.

\section{Excision}\label{section: excision}
In this technical section, we will develop, under very weak assumptions, yet another excision formula of the index, which will be a technical basis to the rest of the paper and might be of independent interest.

Before going into details, we first explain some basic ideas underlying our proof of an excision formula, \emph{Witten localisation arguments}~\cite{MR683171}.
Let $U_0 := \{x \in \R^d \mid \abs{x}<2 \}$ and $U_1 := \{x \in \R^d \mid 1 < \abs{x} \}$.
We consider a Dirac-type operator $D$ and a potential term $h$ on $\R^d$.
If $D$ and $h$ anti-commute on $U_1$, then we have $(D+mh)^2 = D^2 + m^2h^2$ on $U_1$ for any $m > 0$.
When $m$ is very large, the second term $m^2h^2$ is also very large.
Hence, eigenmodes with small eigenvalues are very suppressed in this region $U_1$.
In other words, a particle of quantum mechanics is rarely found in the region where the potential energy is very large.
Thus, when $m$ is large enough, the eigenmodes of $(D+mh)^2$ with small eigenvalues are localised and determined on $U_0$.

We set up notation.
Let $Z$ be a complete Riemannian manifold and $S_Z$ a hermitian vector bundle on $Z$.
We denote by $C^{\infty}_c(Z;S_Z)$ the space of compactly supported smooth sections of $S_Z$.
Let $\DiracOperatorOnZ \colon C^{\infty}_c(Z;S_Z) \to C^{\infty}_c(Z;S_Z)$ be a first-order, elliptic partial differential operator that is essentially self-adjoint on $L^2(Z; S_Z)$.
We denote the symbol of $\DiracOperatorOnZ$ by $\sigma_{\DiracOperatorOnZ} \in C^{\infty}(Z; \Hom(T^*Z, \Endo(S_Z)))$.
Let $h$ be a (not necessarily smooth but measurable) self-adjoint endomorphism of $S_Z$ whose eigenvalues are uniformly bounded on $Z$.
For each $m > 0$, we set ${\DiracOperatorOnZ}_m := \DiracOperatorOnZ + mh$.
Throughout this section, $\innerProduct{\cdot}{\cdot}$ will denote a pointwise hermitian inner product, $\abs{\cdot}$ a pointwise norm, and $\norm{\cdot}$ an $L^2$-norm, and denote the exterior derivative of a function $f$ by $df$.

\begin{lemma}\label{lemma: almost splitting}
  Let $\beta_0, \beta_1 \in C^{\infty}(Z;\R)$ satisfy $\beta_0^2 + \beta_1^2 = 1$.
  We have a pointwise equality
  \begin{equation}
    \abs{{\DiracOperatorOnZ}_m(\beta_0 \Phi)}^2 + \abs{{\DiracOperatorOnZ}_m(\beta_1 \Phi)}^2 = \abs{{\DiracOperatorOnZ}_m \Phi}^2 + \abs{\sigma_{\DiracOperatorOnZ}(d\beta_0) \Phi}^2 + \abs{\sigma_{\DiracOperatorOnZ}(d\beta_1) \Phi}^2
  \end{equation}
  for any $\Phi \in C^{\infty}(Z;S_Z)$.
\end{lemma}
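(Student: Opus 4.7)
The plan is a direct pointwise computation using the fact that for any first-order differential operator $\DiracOperatorOnZ$ with symbol $\sigma_{\DiracOperatorOnZ}$ and any scalar function $\beta \in C^\infty(Z;\R)$, the commutator is multiplication by the symbol applied to $d\beta$. Since the potential term $mh$ is a bundle endomorphism, it commutes with multiplication by the scalar $\beta$, so the same Leibniz-type rule carries over to ${\DiracOperatorOnZ}_m$. Explicitly, for $i = 0,1$,
\begin{equation}
  {\DiracOperatorOnZ}_m(\beta_i \Phi) = \beta_i \, {\DiracOperatorOnZ}_m \Phi + \sigma_{\DiracOperatorOnZ}(d\beta_i) \Phi.
\end{equation}

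Next I would expand the pointwise squared norm of each side using the hermitian inner product:
\begin{equation}
  \abs{{\DiracOperatorOnZ}_m(\beta_i \Phi)}^2 = \beta_i^2 \abs{{\DiracOperatorOnZ}_m \Phi}^2 + 2 \beta_i \realpart \innerProduct{{\DiracOperatorOnZ}_m \Phi}{\sigma_{\DiracOperatorOnZ}(d\beta_i) \Phi} + \abs{\sigma_{\DiracOperatorOnZ}(d\beta_i) \Phi}^2.
\end{equation}
Summing over $i = 0, 1$ and using $\beta_0^2 + \beta_1^2 = 1$, the first sum collapses to $\abs{{\DiracOperatorOnZ}_m \Phi}^2$ and the last two terms are exactly the ones appearing on the right-hand side of the claimed identity. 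So the only thing left to check is that the cross term vanishes.

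For the cross term, I would use linearity of $\sigma_{\DiracOperatorOnZ}$ in its cotangent argument to write
\begin{equation}
  \beta_0 \sigma_{\DiracOperatorOnZ}(d\beta_0) + \beta_1 \sigma_{\DiracOperatorOnZ}(d\beta_1) = \sigma_{\DiracOperatorOnZ}(\beta_0 \, d\beta_0 + \beta_1 \, d\beta_1) = \sigma_{\DiracOperatorOnZ}\bigl( \tfrac{1}{2} d(\beta_0^2 + \beta_1^2) \bigr) = 0,
\end{equation}
since $\beta_0^2 + \beta_1^2 \equiv 1$ implies $d(\beta_0^2 + \beta_1^2) = 0$. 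This forces the real part term to vanish pointwise, and the identity follows.

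There is no real obstacle here; the lemma is essentially the pointwise IMS (Ismagilov–Morgan–Simon) localisation formula adapted to a first-order operator. The only subtlety to flag is that ${\DiracOperatorOnZ}_m$ is perturbed by a merely measurable endomorphism $h$, but this is harmless because $h$ commutes with multiplication by the scalar $\beta_i$, so the commutator $[{\DiracOperatorOnZ}_m, \beta_i]$ equals $[\DiracOperatorOnZ, \beta_i] = \sigma_{\DiracOperatorOnZ}(d\beta_i)$ pointwise wherever both sides are defined, which suffices for the pointwise identity applied to $\Phi \in C^\infty(Z; S_Z)$.
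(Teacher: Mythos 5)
Your proof is correct and follows essentially the same route as the paper: apply the Leibniz rule to ${\DiracOperatorOnZ}_m(\beta_i \Phi)$, expand the squared norms, sum using $\beta_0^2 + \beta_1^2 = 1$, and observe that the cross terms cancel. You are slightly more explicit than the paper in spelling out \emph{why} the cross terms vanish (via $\beta_0\,d\beta_0 + \beta_1\,d\beta_1 = \tfrac12 d(\beta_0^2+\beta_1^2) = 0$), which the paper leaves implicit.
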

\begin{proof}
  Fix $\Phi \in C^{\infty}(Z;S_Z)$.
  Since $\DiracOperatorOnZ(\beta_0 \Phi) = \beta_0 (\DiracOperatorOnZ \Phi) + \sigma_{\DiracOperatorOnZ}(d\beta_0) \Phi$, we have ${\DiracOperatorOnZ}_m(\beta_0 \Phi) = \beta_0 ({\DiracOperatorOnZ}_m \Phi) + \sigma_{\DiracOperatorOnZ}(d\beta_0) \Phi$.
  Hence, $\abs{{\DiracOperatorOnZ}_m(\beta_0 \Phi)}^2 = \beta_0^2 \abs{{\DiracOperatorOnZ}_m \Phi}^2 + \abs{\sigma_{\DiracOperatorOnZ}(d\beta_0) \Phi}^2 + 2 \realpart\innerProduct{\beta_0({\DiracOperatorOnZ}_m \Phi)}{\sigma_{\DiracOperatorOnZ}(d\beta_0)\Phi}$.
  Thus, we have
  \begin{multline}
    \abs{{\DiracOperatorOnZ}_m(\beta_0 \Phi)}^2 + \abs{{\DiracOperatorOnZ}_m(\beta_1 \Phi)}^2 = \abs{{\DiracOperatorOnZ}_m \Phi}^2 + \abs{\sigma_{\DiracOperatorOnZ}(d\beta_0) \Phi}^2  + \abs{\sigma_{\DiracOperatorOnZ}(d\beta_1) \Phi}^2 \\
    + 2 \realpart\innerProduct{\beta_0({\DiracOperatorOnZ}_m \Phi)}{\sigma_{\DiracOperatorOnZ}(d\beta_0)\Phi} + 2 \realpart\innerProduct{\beta_1({\DiracOperatorOnZ}_m \Phi)}{\sigma_{\DiracOperatorOnZ}(d\beta_1)\Phi}.
  \end{multline}
  The assumption $\beta_0^2 + \beta_1^2 = 1$ implies $2 \innerProduct{\beta_0({\DiracOperatorOnZ}_m \Phi)}{\sigma_{\DiracOperatorOnZ}(d\beta_0)\Phi} + 2 \innerProduct{\beta_1({\DiracOperatorOnZ}_m \Phi)}{\sigma_{\DiracOperatorOnZ}(d\beta_1)\Phi} = 0$.
\end{proof}

\begin{lemma}\label{lemma: integral equality}
  Under the assumption of Lemma~\ref{lemma: almost splitting}, suppose further that $h$ is smooth and anti-commutes with $\DiracOperatorOnZ$ on $\supp \beta_1$.
  Then, we have an $L^2$-integral equality
  \begin{equation}
    m^2 \norm{h \beta_1 \Phi}^2 \le \norm{{\DiracOperatorOnZ}_m \Phi}^2 + \norm{\sigma_{\DiracOperatorOnZ}(d\beta_0) \Phi}^2 + \norm{\sigma_{\DiracOperatorOnZ}(d\beta_1) \Phi}^2
  \end{equation}
  for any $\Phi \in C^{\infty}_c(Z;S_Z)$.
\end{lemma}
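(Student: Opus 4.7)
The plan is to combine the pointwise identity of Lemma~\ref{lemma: almost splitting} with a positivity estimate coming from squaring ${\DiracOperatorOnZ}_m$ on $\supp \beta_1$.

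First, I would integrate the pointwise identity of Lemma~\ref{lemma: almost splitting} over $Z$, obtaining
$$\norm{{\DiracOperatorOnZ}_m(\beta_0 \Phi)}^2 + \norm{{\DiracOperatorOnZ}_m(\beta_1 \Phi)}^2 = \norm{{\DiracOperatorOnZ}_m \Phi}^2 + \norm{\sigma_{\DiracOperatorOnZ}(d\beta_0) \Phi}^2 + \norm{\sigma_{\DiracOperatorOnZ}(d\beta_1) \Phi}^2.$$
Dropping the non-negative term $\norm{{\DiracOperatorOnZ}_m(\beta_0 \Phi)}^2$ already produces the desired right-hand side as an upper bound for $\norm{{\DiracOperatorOnZ}_m(\beta_1 \Phi)}^2$.

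The substantive step is then to prove the matching lower bound $\norm{{\DiracOperatorOnZ}_m(\beta_1 \Phi)}^2 \ge m^2 \norm{h \beta_1 \Phi}^2$. Setting $\psi := \beta_1 \Phi$, which is smooth with compact support contained in $\supp \beta_1$, the anti-commutation hypothesis gives, on $\supp \beta_1$,
$${\DiracOperatorOnZ}_m^2 = (\DiracOperatorOnZ + mh)^2 = \DiracOperatorOnZ^2 + m(\DiracOperatorOnZ h + h \DiracOperatorOnZ) + m^2 h^2 = \DiracOperatorOnZ^2 + m^2 h^2.$$
Since $\supp(\DiracOperatorOnZ \psi) \subset \supp \psi \subset \supp \beta_1$ and $h$ is smooth on that set, ${\DiracOperatorOnZ}_m \psi$ is smooth and compactly supported, hence lies in the domain of the self-adjoint closure of ${\DiracOperatorOnZ}_m$. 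An integration by parts then yields
$$\norm{{\DiracOperatorOnZ}_m \psi}^2 = \innerProduct{{\DiracOperatorOnZ}_m^2 \psi}{\psi} = \norm{\DiracOperatorOnZ \psi}^2 + m^2 \norm{h\psi}^2 \ge m^2 \norm{h\psi}^2,$$
which is precisely the lower bound needed. Chaining it with the upper bound from the first step gives the claimed inequality.

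The main obstacle is making the integration by parts in the second step rigorous given that $h$ is only bounded measurable off $\supp \beta_1$. The resolution is built into the localisation structure: because $\psi$ is supported inside $\supp \beta_1$, every quantity entering the identity ${\DiracOperatorOnZ}_m^2 \psi = \DiracOperatorOnZ^2 \psi + m^2 h^2 \psi$ is determined by the smooth data on $\supp \beta_1$, so the essential self-adjointness of $\DiracOperatorOnZ$ on $C^{\infty}_c(Z;S_Z)$ suffices to justify the computation.
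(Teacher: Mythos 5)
Your proof is correct and essentially the same as the paper's. The paper also integrates Lemma~\ref{lemma: almost splitting}, then shows $\realpart\bracket{\DiracOperatorOnZ(\beta_1\Phi)}{h\beta_1\Phi}_{L^2}=0$ by formal self-adjointness of $\DiracOperatorOnZ$ and anticommutation with $h$ on $\supp\beta_1$, yielding $\norm{{\DiracOperatorOnZ}_m(\beta_1\Phi)}^2 = \norm{\DiracOperatorOnZ(\beta_1\Phi)}^2 + m^2\norm{h\beta_1\Phi}^2$; your route through ${\DiracOperatorOnZ}_m^2 = \DiracOperatorOnZ^2 + m^2h^2$ on $\supp\beta_1$ followed by integration by parts is the same computation organized slightly differently.
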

\begin{proof}
  Fix $\Phi \in C^{\infty}_c(Z;S_Z)$.
  By Lemma~\ref{lemma: almost splitting}, we have an inequality
  \begin{equation}
    \int_Z \abs{{\DiracOperatorOnZ}_m(\beta_1 \Phi)}^2 \,d\mu \le \int_Z \big( \abs{{\DiracOperatorOnZ}_m \Phi}^2 + \abs{\sigma_{\DiracOperatorOnZ}(d\beta_0) \Phi}^2  + \abs{\sigma_{\DiracOperatorOnZ}(d\beta_1) \Phi}^2 \big) \,d\mu.
  \end{equation}
  Since $\DiracOperatorOnZ$ and $h$ anti-commute on $\supp \beta_1$, we deduce that
  \begin{equation}\begin{split}
    \int_Z \innerProduct{\DiracOperatorOnZ(\beta_1 \Phi)}{h \beta_1 \Phi} \,d\mu =& \int_Z \innerProduct{\beta_1 \Phi}{\DiracOperatorOnZ(h \beta_1 \Phi)} \,d\mu \\
    =& \int_Z \innerProduct{\beta_1 \Phi}{-h \DiracOperatorOnZ(\beta_1 \Phi)} \,d\mu = - \int_Z \innerProduct{h\beta_1 \Phi}{\DiracOperatorOnZ(\beta_1 \Phi)} \,d\mu.
  \end{split}\end{equation}
  Hence, $\realpart\bracket{\DiracOperatorOnZ(\beta_1 \Phi)}{h\beta_1\Phi}_{L^2} = 0$, and we have $\norm{{\DiracOperatorOnZ}_m(\beta_1 \Phi)}^2 = \norm{\DiracOperatorOnZ(\beta_1 \Phi)}^2 + \norm{m h \beta_1 \Phi}^2$.
  Thus,
  \begin{equation}
    \norm{m h \beta_1 \Phi}^2 \le \norm{{\DiracOperatorOnZ}_m(\beta_1 \Phi)}^2 \le \norm{{\DiracOperatorOnZ}_m \Phi}^2 + \norm{\sigma_{\DiracOperatorOnZ}(d\beta_0) \Phi}^2 + \norm{\sigma_{\DiracOperatorOnZ}(d\beta_1) \Phi}^2,
  \end{equation}
  as required.
\end{proof}

\begin{lemma}\label{lemma: weak localisation}
  Let $Z = U_0 \cup U_1$ be an open covering of $Z$.
  Let $1 = \gamma_0^2 + \gamma_1^2$ be a smooth partition of unity subordinate to $U_0$ and $U_1$.
  We assume the following three conditions:
  \begin{enumerate}
    \item $h$ is smooth and anti-commutes with $\DiracOperatorOnZ$ on $U_1$.\label{assumption: wl-1}
    \item the eigenvalues of $h^2$ are greater than or equal  to $1$ on $U_1$.\label{assumption: wl-2}
    \item the eigenvalues of $\sigma_{\DiracOperatorOnZ}(d\gamma_0)$ and $\sigma_{\DiracOperatorOnZ}(d\gamma_1)$ are bounded on $U_0 \cap U_1$.\label{assumption: wl-3}
  \end{enumerate}
  Then, for any $\Lambda \ge 0$ and $\Phi \in C^{\infty}_c(Z;S_Z)$ with $\norm{{\DiracOperatorOnZ}_m \Phi}^2 \le \Lambda^2 \norm{\Phi}^2$, we have
  \begin{equation}
    m^2\norm{\gamma_1 \Phi}^2 \le (C_1^2 + \Lambda^2) \norm{\Phi}^2,
  \end{equation}
  where we set
  \begin{equation}
    C_1^2
    := \sup_{x \in U_0 \cap U_1} \big( \abs{\sigma_{\DiracOperatorOnZ}(d\gamma_0)}^2 + \abs{\sigma_{\DiracOperatorOnZ}(d\gamma_1)}^2 \big)
    = \sup_{x \in U_0 \cap U_1} \sup_{\phi \in S_x} \left( \frac{\abs{\sigma_{\DiracOperatorOnZ}(d\gamma_0)\phi}^2}{\abs{\phi}^2} + \frac{\abs{\sigma_{\DiracOperatorOnZ}(d\gamma_1)\phi}^2}{\abs{\phi}^2} \right).
  \end{equation}
\end{lemma}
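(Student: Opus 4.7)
The plan is to apply Lemma~\ref{lemma: integral equality} directly with $\beta_i := \gamma_i$. Assumption~(i) gives exactly the anti-commutation hypothesis required on $\supp \beta_1 \subset U_1$, and $\beta_0^2 + \beta_1^2 = 1$ holds by construction, so the lemma produces
\begin{equation*}
m^2 \norm{h \gamma_1 \Phi}^2 \le \norm{{\DiracOperatorOnZ}_m \Phi}^2 + \norm{\sigma_{\DiracOperatorOnZ}(d\gamma_0) \Phi}^2 + \norm{\sigma_{\DiracOperatorOnZ}(d\gamma_1) \Phi}^2.
\end{equation*}
From here, three elementary estimates should convert this into the desired bound.

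For the left-hand side, I would use assumption~(ii) to replace $\norm{h\gamma_1\Phi}^2$ by $\norm{\gamma_1\Phi}^2$: since $\gamma_1$ is supported in $U_1$ and $h(x)$ is self-adjoint with $h(x)^2 \ge \id$ on $U_1$, the pointwise identity $\abs{h\gamma_1\Phi}^2 = \gamma_1^2 \innerProduct{h^2 \Phi}{\Phi} \ge \gamma_1^2 \abs{\Phi}^2$ holds on $U_1$, and both sides vanish off $U_1$, so integration yields $\norm{h\gamma_1\Phi}^2 \ge \norm{\gamma_1\Phi}^2$. For the right-hand side, the hypothesis $\norm{{\DiracOperatorOnZ}_m\Phi}^2 \le \Lambda^2\norm{\Phi}^2$ absorbs the first term, and for the two symbol terms I would observe that $d\gamma_0$ and $d\gamma_1$ are supported in $U_0 \cap U_1$: on $Z \setminus U_1$ one has $\gamma_1 \equiv 0$ and thus $\gamma_0 \equiv \pm 1$, so both differentials vanish there, and symmetrically on $Z \setminus U_0$. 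The definition of $C_1^2$ then yields the pointwise bound $\abs{\sigma_{\DiracOperatorOnZ}(d\gamma_0)\Phi}^2 + \abs{\sigma_{\DiracOperatorOnZ}(d\gamma_1)\Phi}^2 \le C_1^2\abs{\Phi}^2$ on the overlap, contributing at most $C_1^2\norm{\Phi}^2$ after integration.

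Assembling the three estimates immediately produces the claimed inequality. I do not anticipate any genuine obstacle: the lemma is essentially the natural packaging of Lemma~\ref{lemma: integral equality} under the three localisation hypotheses, with the sole subtlety being the support observation that $d\gamma_0$ and $d\gamma_1$ both vanish outside $U_0 \cap U_1$, which is precisely why assumption~(iii) need only control the symbols on the overlap.
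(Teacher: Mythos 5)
Your proof is correct and follows essentially the same route as the paper: apply Lemma~\ref{lemma: integral equality} with $\beta_i = \gamma_i$ (valid since $\supp\gamma_1 \subset U_1$), then bound the left side from below using assumption~(ii) and the right side from above using the hypothesis on $\norm{{\DiracOperatorOnZ}_m\Phi}$ together with the definition of $C_1$. Your explicit support observation that $d\gamma_0$ and $d\gamma_1$ vanish outside $U_0 \cap U_1$ is left implicit in the paper but is exactly the justification needed for the $C_1^2\norm{\Phi}^2$ bound.
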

\begin{proof}
  Fix $\Lambda \ge 0$ and $\Phi \in C^{\infty}_c(Z;S_Z)$ with $\norm{{\DiracOperatorOnZ}_m \Phi}^2 \le \Lambda^2 \norm{\Phi}^2$.
  By assumption~\eqref{assumption: wl-2}, we have $m^2 \norm{h \gamma_1 \Phi}^2 \ge m^2 \norm{\gamma_1 \Phi}^2$.
  By definition of $C_1$, we have $\norm{\sigma_{\DiracOperatorOnZ}(d\gamma_0) \Phi}^2 + \norm{\sigma_{\DiracOperatorOnZ}(d\gamma_1) \Phi}^2 \le C_1^2 \norm{\Phi}^2$.
  By assumption~\eqref{assumption: wl-1}, we can use Lemma~\ref{lemma: integral equality}.
  Thus, we obtain
  \begin{equation}\begin{split}
    m^2\norm{\gamma_1 \Phi}^2 &\le m^2 \norm{h \gamma_1 \Phi}^2 \\
    &\le \norm{{\DiracOperatorOnZ}_m \Phi}^2 + \norm{\sigma_{\DiracOperatorOnZ}(d\gamma_0) \Phi}^2 + \norm{\sigma_{\DiracOperatorOnZ}(d\gamma_1) \Phi}^2 \\
    &\le \norm{{\DiracOperatorOnZ}_m \Phi}^2 + C_1^2 \norm{\Phi}^2 \le \Lambda^2 \norm{\Phi}^2 + C_1^2 \norm{\Phi}^2 = (C_1^2 + \Lambda^2) \norm{\Phi}^2,
  \end{split}\end{equation}
  as required.
\end{proof}

\begin{lemma}\label{lemma: good partitions of unity}
  Let $Z = U_0 \cup U_1$ be an open covering of $Z$.
  Let $1 = \eta_0 + (1-\eta_0)$ be a smooth partition of unity subordinate to $U_0$ and $U_1$.
  We assume that $\abs{\sigma_{\DiracOperatorOnZ}(d\eta_0)}$ is bounded on $U_0 \cap U_1$.
  Then, there exist smooth partitions of unity $1 = \beta_0^2 + \beta_1^2 = \gamma_0^2 + \gamma_1^2$ subordinate to $U_0$ and $U_1$ such that both $\abs{\sigma_{\DiracOperatorOnZ}(d\beta_0)}^2 + \abs{\sigma_{\DiracOperatorOnZ}(d\beta_1)}^2$ and $\abs{\sigma_{\DiracOperatorOnZ}(d\gamma_0)}^2 + \abs{\sigma_{\DiracOperatorOnZ}(d\gamma_1)}^2$ are bounded on $U_0 \cap U_1$, and that $\gamma_1 \equiv 1$ on $(\supp d\beta_0) = (\supp d\beta_1)$.
\end{lemma}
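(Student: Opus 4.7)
The plan is to construct both pairs as compositions of $\eta_0$ with one-variable auxiliary maps, exploiting $\sin^2+\cos^2=1$ to obtain the two sum-of-squares identities automatically. Concretely, I would choose two smooth non-decreasing functions $\theta,\psi\colon[0,1]\to[0,\pi/2]$ and set
\begin{equation*}
  \beta_0:=\sin\bigl(\theta(\eta_0)\bigr),\quad \beta_1:=\cos\bigl(\theta(\eta_0)\bigr),\quad \gamma_0:=\sin\bigl(\psi(\eta_0)\bigr),\quad \gamma_1:=\cos\bigl(\psi(\eta_0)\bigr),
\end{equation*}
so that $\beta_0^2+\beta_1^2=\gamma_0^2+\gamma_1^2=1$ without any further work.

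The shapes of $\theta$ and $\psi$ are then forced by the remaining requirements. Subordination dictates $\theta(0)=\psi(0)=0$ and $\theta(1)=\psi(1)=\pi/2$: outside $U_0$ the function $\eta_0$ vanishes, so $\beta_0=\gamma_0=0$ there; outside $U_1$ the function $1-\eta_0$ vanishes, so $\eta_0=1$ and $\beta_1=\gamma_1=0$ there. For the support equality $\supp(d\beta_0)=\supp(d\beta_1)$, I observe that $d\beta_0=\cos(\theta(\eta_0))\,\theta'(\eta_0)\,d\eta_0$ and $d\beta_1=-\sin(\theta(\eta_0))\,\theta'(\eta_0)\,d\eta_0$; since $\sin$ and $\cos$ are jointly nonvanishing on the open interval $(0,\pi/2)$, both $d\beta_i$ vanish precisely where $\theta'(\eta_0)\,d\eta_0=0$ or $\theta(\eta_0)\in\{0,\pi/2\}$. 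I would therefore take $\theta\equiv 0$ on $[0,1/4]$, $\theta$ strictly increasing on $(1/4,3/4)$, and $\theta\equiv\pi/2$ on $[3/4,1]$, which confines $\supp(d\beta_0)=\supp(d\beta_1)$ inside $\{1/4\le\eta_0\le 3/4\}\cap\supp(d\eta_0)$. For the condition $\gamma_1\equiv 1$ on $\supp(d\beta_i)$, I then choose $\psi\equiv 0$ on $[0,3/4]$ and $\psi\equiv\pi/2$ on $[7/8,1]$, so that $\gamma_1=\cos\psi(\eta_0)=1$ throughout $\{\eta_0\le 3/4\}$.

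For the boundedness clause, each of $d\beta_i$ and $d\gamma_i$ is a pointwise scalar multiple of $d\eta_0$ by a factor such as $\cos(\theta(\eta_0))\,\theta'(\eta_0)$, and these factors are uniformly bounded because $\theta$ and $\psi$ are smooth on the compact interval $[0,1]$. Hence $\abs{\sigma_{\DiracOperatorOnZ}(d\beta_i)}\le C\abs{\sigma_{\DiracOperatorOnZ}(d\eta_0)}$ and the analogous inequality for $\gamma_i$ hold, and the hypothesis that $\abs{\sigma_{\DiracOperatorOnZ}(d\eta_0)}$ is bounded on $U_0\cap U_1$ transfers directly to the sums we need to bound. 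I do not foresee a substantive obstacle; the only delicate point is to arrange the flat zones of $\theta$ and $\psi$ so that $\supp(d\beta_i)$ lies strictly inside the locus where $\gamma_1\equiv 1$, which is why the flat zone of $\psi$ is chosen wider than the region on which $\theta'$ is supported.
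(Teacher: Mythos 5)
Your proposal is correct and follows essentially the same strategy as the paper: precompose one-variable cut-off functions with $\eta_0$, stagger the flat zones so that $\gamma_1\equiv 1$ on $\supp d\beta_0$, and read the boundedness off the chain rule $d(\varphi\circ\eta_0)=\varphi'(\eta_0)\,d\eta_0$. Your $\sin/\cos$ parametrisation is a minor refinement that delivers $\beta_0^2+\beta_1^2=1$ automatically and sidesteps any concern about the smoothness of $\sqrt{1-\beta^2}$ in the paper's choice, but the underlying construction is the same.
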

\begin{proof}
  Let $\beta \colon [0,1] \to [0,1]$ and $\gamma \colon [0,1] \to [0,1]$ be smooth cut-off functions such that $\beta^2 + (1-\beta^2) = 1$ and $\gamma^2 + (1-\gamma^2) = 1$, and that $\gamma \equiv 0$ on $[0,1/4]$, $\beta \equiv 0$ on $[0,1/2]$, $\gamma \equiv 1$ on $[1/2,1]$, and $\beta \equiv 1$ on $[3/4,1]$.
  \begin{center}
    \begin{tikzpicture}
      \draw [->] (-0.5,0) -- (8.5,0);
      \draw [->] (0,-0.5) -- (0,2.5);
      \node at (0,0) [below left]{$O$};
      \draw
        (2,0.1) -- (2,-0.1) node [below]{$\frac{1}{4}$}
        (4,0.1) -- (4,-0.1) node [below]{$\frac{1}{2}$}
        (6,0.1) -- (6,-0.1) node [below]{$\frac{3}{4}$}
        (8,0.1) -- (8,-0.1) node [below]{$1$};
      \draw (0.1,2) -- (-0.1,2) node [left]{$1$};
      \draw [blue]
        [rounded corners] (0,0) -- (2.2,0) -- (3.8,2) -- (8,2)
        node at (3,1) [above]{$\gamma$};
      \draw [red]
        [rounded corners] (0,0) -- (4.2,0) -- (5.8,2) -- (8,2)
        node at (5,1) [above]{$\beta$};
    \end{tikzpicture}
  \end{center}
  We set $\beta_1 := \beta \circ (1-\eta_0)$ and $\gamma_1 := \gamma \circ (1-\eta_0)$, which clearly satisfy the claimed properties.
\end{proof}

\begin{proposition}\label{proposition: localisation}
  Let $Z = U_0 \cup U_1$ and $1 = \eta_0 + (1-\eta_0)$ satisfy the assumptions of Lemma~\ref{lemma: good partitions of unity}.
  Let $1 = \beta_0^2 + \beta_1^2 = \gamma_0^2 + \gamma_1^2$ be partitions of unity constructed in Lemma~\ref{lemma: good partitions of unity}.
  We also assume that $h$ satisfies the conditions~\eqref{assumption: wl-1} and~\eqref{assumption: wl-2} of Lemma~\ref{lemma: weak localisation}.
  Then, there exists a constant $C_0 > 0$ that depends only on $\eta_0$ and $\sigma_{\DiracOperatorOnZ}$ such that, for any $\Lambda \ge 0$ and $\Phi \in C^{\infty}_c(Z;S_Z)$ with $\norm{{\DiracOperatorOnZ}_m \Phi}^2 \le \Lambda^2 \norm{\Phi}^2$, we have
  \begin{gather}
    \norm{{\DiracOperatorOnZ}_m (\beta_0 \Phi)}^2 \le \left( \Lambda^2 + C_0^2\frac{C_0^2 + \Lambda^2}{m^2} \right) \norm{\Phi}^2\label{eq: eigenvalue comparison} \\
    \shortintertext{and}
    \left( 1 - \frac{C_0^2 + \Lambda^2}{m^2} \right) \norm{\Phi}^2 \le \norm{\beta_0 \Phi}^2.\label{eq: eigenfunction localisation}
  \end{gather}
\end{proposition}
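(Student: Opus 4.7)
The plan is to apply Lemma~\ref{lemma: weak localisation} to the $\gamma$-partition to obtain a bound on $\norm{\gamma_1 \Phi}$, and then to transfer this control to the $\beta$-partition by exploiting the interlocking property of the two partitions built into the construction of Lemma~\ref{lemma: good partitions of unity}. Define
\begin{equation}
  C_0^2 := \sup_{U_0 \cap U_1} \bigl( \abs{\sigma_{\DiracOperatorOnZ}(d\beta_0)}^2 + \abs{\sigma_{\DiracOperatorOnZ}(d\beta_1)}^2 + \abs{\sigma_{\DiracOperatorOnZ}(d\gamma_0)}^2 + \abs{\sigma_{\DiracOperatorOnZ}(d\gamma_1)}^2 \bigr),
\end{equation}
which is finite by Lemma~\ref{lemma: good partitions of unity} and depends only on $\eta_0$ and $\sigma_{\DiracOperatorOnZ}$. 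Applying Lemma~\ref{lemma: weak localisation} to $(\gamma_0,\gamma_1)$ immediately gives $m^2 \norm{\gamma_1 \Phi}^2 \le (C_0^2 + \Lambda^2) \norm{\Phi}^2$, which will be the key intermediate estimate for both inequalities.

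For~\eqref{eq: eigenvalue comparison}, I would integrate the pointwise identity of Lemma~\ref{lemma: almost splitting} and drop the non-negative term $\norm{{\DiracOperatorOnZ}_m(\beta_1 \Phi)}^2$ to obtain
\begin{equation}
  \norm{{\DiracOperatorOnZ}_m(\beta_0 \Phi)}^2 \le \Lambda^2 \norm{\Phi}^2 + \norm{\sigma_{\DiracOperatorOnZ}(d\beta_0) \Phi}^2 + \norm{\sigma_{\DiracOperatorOnZ}(d\beta_1) \Phi}^2.
\end{equation}
Since $\sigma_{\DiracOperatorOnZ}(d\beta_i)$ vanishes off $\supp d\beta_i$, and since $\gamma_1 \equiv 1$ on $\supp d\beta_0 = \supp d\beta_1$ by Lemma~\ref{lemma: good partitions of unity}, the pointwise bound $\abs{\sigma_{\DiracOperatorOnZ}(d\beta_0)\Phi}^2 + \abs{\sigma_{\DiracOperatorOnZ}(d\beta_1)\Phi}^2 \le C_0^2 \abs{\gamma_1 \Phi}^2$ holds on all of $Z$. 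Integrating and inserting the $\gamma_1$-bound above yields~\eqref{eq: eigenvalue comparison}.

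For~\eqref{eq: eigenfunction localisation}, I would unwind the explicit construction of the cut-offs in the proof of Lemma~\ref{lemma: good partitions of unity}: because $\beta_1 = \beta \circ (1-\eta_0)$ is supported in $\{1-\eta_0 \ge 1/2\}$, on which $\gamma_1 = \gamma \circ (1-\eta_0)$ already equals $1$, and because $\abs{\beta_1} \le 1$, the pointwise inequality $\abs{\beta_1 \Phi}^2 \le \abs{\gamma_1 \Phi}^2$ holds on $Z$. Integrating and using the $\gamma_1$-bound gives $\norm{\beta_1 \Phi}^2 \le (C_0^2 + \Lambda^2)\norm{\Phi}^2/m^2$, and then subtracting from the orthogonal decomposition $\norm{\Phi}^2 = \norm{\beta_0 \Phi}^2 + \norm{\beta_1 \Phi}^2$ yields~\eqref{eq: eigenfunction localisation}.

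The main obstacle in this plan is the pointwise comparison $\abs{\beta_1 \Phi} \le \abs{\gamma_1 \Phi}$ used for~\eqref{eq: eigenfunction localisation}: it relies on the strengthened fact that $\gamma_1 \equiv 1$ on all of $\supp \beta_1$, which is visible from the explicit construction of the cut-offs but is not part of the bare statement of Lemma~\ref{lemma: good partitions of unity}. Once this observation is made, both inequalities follow almost mechanically from combining Lemmas~\ref{lemma: almost splitting} and~\ref{lemma: weak localisation}.
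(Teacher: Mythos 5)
Your proof is correct, and the argument for~\eqref{eq: eigenvalue comparison} matches the paper's exactly. For~\eqref{eq: eigenfunction localisation}, however, you take a detour that the paper avoids: the paper simply notes that the $\beta$-partition itself satisfies the hypotheses of Lemma~\ref{lemma: weak localisation} (its cut-off derivatives are bounded on $U_0 \cap U_1$ by Lemma~\ref{lemma: good partitions of unity}), and applies that lemma directly to get $m^2\norm{\beta_1 \Phi}^2 \le (C_2^2 + \Lambda^2)\norm{\Phi}^2$, then subtracts from $\norm{\Phi}^2 = \norm{\beta_0\Phi}^2 + \norm{\beta_1\Phi}^2$. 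You instead bound $\norm{\beta_1 \Phi}$ by $\norm{\gamma_1 \Phi}$ via the pointwise inequality $\abs{\beta_1} \le \gamma_1$, which—as you rightly flag—needs the fact that $\gamma_1 \equiv 1$ on all of $\supp \beta_1$ (not merely on $\supp d\beta_1$). That stronger fact is indeed true from the explicit cut-offs in Lemma~\ref{lemma: good partitions of unity}, but it is not part of that lemma's bare statement, so your argument implicitly strengthens the interface between the two lemmas. The paper's route sidesteps this entirely and is the cleaner one; your route still gives a correct proof, at the cost of peeking inside the construction of the partitions. Your choice of $C_0^2$ as a sum rather than a $\max$ is larger than the paper's but harmless, since it still depends only on $\eta_0$ and $\sigma_{\DiracOperatorOnZ}$ and only appears as an upper bound.
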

\begin{proof}
  Fix $\Lambda \ge 0$ and $\Phi \in C^{\infty}_c(Z;S_Z)$ with $\norm{{\DiracOperatorOnZ}_m \Phi}^2 \le \Lambda^2 \norm{\Phi}^2$.
  Set $C_1^2 := \sup_{U_0 \cap U_1} \big( \abs{\sigma_{\DiracOperatorOnZ}(d\gamma_0)}^2 + \abs{\sigma_{\DiracOperatorOnZ}(d\gamma_1)}^2 \big)$ and $C_2^2 := \sup_{U_0 \cap U_1} \big( \abs{\sigma_{\DiracOperatorOnZ}(d\beta_0)}^2 + \abs{\sigma_{\DiracOperatorOnZ}(d\beta_1)}^2 \big)$.

  We first show~\eqref{eq: eigenvalue comparison}.
  By Lemma~\ref{lemma: weak localisation}, we have
  \begin{equation}
    m^2 \norm{\gamma_1 \Phi}^2 \le (C_1^2 + \Lambda^2) \norm{\Phi}^2.
  \end{equation}
  Since $\gamma_1 \equiv 1$ on $(\supp d\beta_0) = (\supp d\beta_1)$, we have
  \begin{equation}
    \norm{\sigma_{\DiracOperatorOnZ}(d\beta_0) \Phi}^2 + \norm{\sigma_{\DiracOperatorOnZ}(d\beta_1) \Phi}^2 \le C_2^2 \norm{\gamma_1 \Phi}^2.
  \end{equation}
  Thus, we obtain
  \begin{equation}
    \norm{\sigma_{\DiracOperatorOnZ}(d\beta_0) \Phi}^2 + \norm{\sigma_{\DiracOperatorOnZ}(d\beta_1) \Phi}^2 \le C_2^2 \frac{C_1^2 + \Lambda^2}{m^2} \norm{\Phi}^2.
  \end{equation}
  By Lemma~\ref{lemma: almost splitting}, we have
  \begin{equation}
    \norm{{\DiracOperatorOnZ}_m (\beta_0 \Phi)}^2 \le \norm{{\DiracOperatorOnZ}_m \Phi}^2 + \norm{\sigma_{\DiracOperatorOnZ}(d\beta_0) \Phi}^2 + \norm{\sigma_{\DiracOperatorOnZ}(d\beta_1) \Phi}^2.
  \end{equation}
  Consequently, we have
  \begin{equation}
    \norm{{\DiracOperatorOnZ}_m (\beta_0 \Phi)}^2 \le \Lambda^2 \norm{\Phi}^2 + C_2^2 \frac{C_1^2 + \Lambda^2}{m^2} \norm{\Phi}^2.
  \end{equation}
  Now set $C_0 := \max \{C_1, C_2\}$, which yields~\eqref{eq: eigenvalue comparison}.

  Next we prove~\eqref{eq: eigenfunction localisation}.
  Since $\beta_0^2 + \beta_1^2 = 1$, we have $\norm{\beta_0 \Phi}^2 + \norm{\beta_1 \Phi}^2 = \norm{\Phi}^2$.
  By Lemma~\ref{lemma: weak localisation}, we have $m^2\norm{\beta_1 \Phi}^2 \le (C_2^2 + \Lambda^2) \norm{\Phi}^2 \le (C_0^2 + \Lambda^2) \norm{\Phi}^2$.
  This completes the proof.
\end{proof}

\begin{proposition}\label{proposition: excision}
  Let $(Z = U_0 \cup U_1, 1 = \eta_0 + (1-\eta_0), S_Z, \DiracOperatorOnZ, h)$ and $(Z' = U'_0 \cup U'_1, 1 = \eta'_0 + (1-\eta'_0), S'_{Z'}, \DiracOperatorOnZ', h')$ be two sets of data as above that satisfy the assumptions of Proposition~\ref{proposition: localisation}.
  We assume that $\DiracOperatorOnZ$ coincides with $\DiracOperatorOnZ'$ on $U_0 \cong U'_0$ in the sense that there exists an isometry $\tau \colon U_0 \to U'_0$ covered by a bundle isometry $\tilde{\tau} \colon \restrictedTo{S_Z}{U_0} \to \restrictedTo{S'_{Z'}}{U'_0}$ such that $\tilde{\tau}^{-1} \circ L'_m \circ \tilde{\tau} = L_m$.
  Then, there exists a constant $C > 0$ that depends only on $\eta_0$, $\eta'_0$, $\sigma_{\DiracOperatorOnZ}$, and $\sigma_{\DiracOperatorOnZ'}$ such that the following holds.
  Fix $\Lambda_2 > \Lambda_1 > \Lambda_0 \ge 0$ and $m >0$.
  If ${\DiracOperatorOnZ}_m$ has only discrete spectrum\footnote{The discrete spectrum of a self-adjoint operator consists of isolated eigenvalues with finite multiplicity.} in $[-\Lambda_0, \Lambda_0]$ and has spectral gaps
  \begin{equation}\label{assumption: spectral gap}
    \big( \spectrum {\DiracOperatorOnZ}_m \big) \cap \big( \big[-\Lambda_2, -\Lambda_0 \big) \cup \big( \Lambda_0, \Lambda_2 \big] \big) = \emptyset
  \end{equation}
  and if
  \begin{equation}\label{assumption: heavy mass}
    m^2 > \max \left\{ \frac{(C^2+\Lambda_2^2)(C^2+\Lambda_1^2)}{\Lambda_2^2-\Lambda_1^2}, \frac{(C^2+\Lambda_1^2)(C^2+\Lambda_0^2)}{\Lambda_1^2-\Lambda_0^2}, (C^2+\Lambda_2^2) \right\},
  \end{equation}
  then ${\DiracOperatorOnZ'}_m$ also has only discrete spectrum in $[-\Lambda_1, \Lambda_1]$ and the number of eigenvalues of ${\DiracOperatorOnZ'}_m$ in $[-\Lambda_1, \Lambda_1]$ counted with multiplicity is equal to that of $\DiracOperatorOnZ_m$ in $[-\Lambda_0, \Lambda_0]$.
\end{proposition}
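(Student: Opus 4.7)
The plan is to use Proposition~\ref{proposition: localisation} in both directions, $Z \to Z'$ and $Z' \to Z$, to transport low-energy states between the two systems and then compare dimensions via the spectral theorem. Let $P$ and $P'$ denote the spectral projections $E_{[-\Lambda_0,\Lambda_0]}(\DiracOperatorOnZ_m)$ and $E_{[-\Lambda_1,\Lambda_1]}(\DiracOperatorOnZ'_m)$; the discrete-spectrum hypothesis for $\DiracOperatorOnZ_m$ gives $\dim \mathrm{range}(P) =: N < \infty$. Set $C := \max\{C_0, C_0'\}$, where $C_0$ and $C_0'$ are the constants furnished by Proposition~\ref{proposition: localisation} applied to the two data sets. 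Since $C^{\infty}_c$ is a core for the essentially self-adjoint operators $\DiracOperatorOnZ_m$ and $\DiracOperatorOnZ'_m$, the inequalities~\eqref{eq: eigenvalue comparison} and~\eqref{eq: eigenfunction localisation} extend by density from $C^{\infty}_c$ to the full domains.

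First I would prove $\dim \mathrm{range}(P') \ge N$. Define $T\Phi := \tilde{\tau}(\beta_0 \Phi)$ on $\mathrm{range}(P)$. The third bound $m^2 > C^2 + \Lambda_2^2$ in~\eqref{assumption: heavy mass} keeps the factor $1-(C^2+\Lambda_0^2)/m^2$ strictly positive, so by~\eqref{eq: eigenfunction localisation} the map $T$ is injective and $V_1 := T(\mathrm{range}(P))$ is an $N$-dimensional subspace of $\mathrm{dom}(\DiracOperatorOnZ'_m)$. Combining~\eqref{eq: eigenvalue comparison} with~\eqref{eq: eigenfunction localisation} at $\Lambda = \Lambda_0$ and invoking the second bound of~\eqref{assumption: heavy mass}, the resulting ratio drops strictly below $\Lambda_1^2$, yielding $\norm{\DiracOperatorOnZ'_m \Psi}^2 < \Lambda_1^2 \norm{\Psi}^2$ for every $\Psi \in V_1$. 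The spectral theorem then forces $P'$ to be injective on $V_1$ (any $\Psi \in V_1$ with $P'\Psi=0$ would otherwise satisfy $\norm{\DiracOperatorOnZ'_m\Psi}^2 > \Lambda_1^2 \norm{\Psi}^2$), so $\dim \mathrm{range}(P') \ge N$.

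The opposite bound $\dim \mathrm{range}(P') \le N$ is where the main obstacle lies. Swapping the roles of $(Z, \DiracOperatorOnZ_m)$ and $(Z', \DiracOperatorOnZ'_m)$ and raising $\Lambda$ from $\Lambda_1$ to $\Lambda_2$, the same construction produces, for any finite-dimensional $V' \subseteq \mathrm{range}(P')$ of dimension $M$, an $M$-dimensional subspace $W := \tilde{\tau}^{-1}(\beta_0' V') \subseteq \mathrm{dom}(\DiracOperatorOnZ_m)$ satisfying the strict inequality $\norm{\DiracOperatorOnZ_m \Phi}^2 < \Lambda_2^2 \norm{\Phi}^2$, this time by the first bound of~\eqref{assumption: heavy mass}. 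The obstacle is that this $L^2$-control on $\norm{\DiracOperatorOnZ_m \Phi}$ does \emph{not} immediately place $W$ inside $\mathrm{range}(P)$. The spectral gap hypothesis~\eqref{assumption: spectral gap} resolves this: every $\Phi \in \mathrm{dom}(\DiracOperatorOnZ_m)$ decomposes orthogonally as $\Phi = \Phi_0 + \Phi_{\infty}$ with $\Phi_0 \in \mathrm{range}(P)$ and $\Phi_{\infty}$ in the spectral subspace for $\R \setminus [-\Lambda_2, \Lambda_2]$, and the spectral theorem yields $\Lambda_2^2 \norm{\Phi_{\infty}}^2 \le \norm{\DiracOperatorOnZ_m \Phi}^2 < \Lambda_2^2 \norm{\Phi}^2$, forcing $\norm{\Phi_0}^2 > 0$. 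Hence the projection $\Phi \mapsto \Phi_0$ is injective on $W$ and takes values in the $N$-dimensional $\mathrm{range}(P)$, giving $M \le N$. Combining the two bounds yields $\dim \mathrm{range}(P') = N$, and since a finite-rank spectral projection on a closed interval precludes essential spectrum there, the spectrum of $\DiracOperatorOnZ'_m$ in $[-\Lambda_1,\Lambda_1]$ consists of finitely many isolated eigenvalues of finite multiplicity.
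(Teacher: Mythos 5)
Your proof is correct and follows essentially the same strategy as the paper: apply Proposition~\ref{proposition: localisation} in both directions, transporting low-lying spectral subspaces via the cutoff $\beta_0$ (resp.\ $\beta_0'$) and the bundle isometry $\tilde\tau$, and then use the spectral theorem plus the spectral-gap hypothesis to show the resulting linear maps into the target spectral subspaces are injective. The paper's proof phrases the second direction as injectivity of a map $\rho'\colon E_1'\to E_2$ composed with the projection $\Pi_2$ onto the $[-\Lambda_2,\Lambda_2]$-spectral subspace (which equals $E_0$ by the gap), whereas you phrase it as showing the orthogonal component $\Phi_\infty$ in the $\R\setminus[-\Lambda_2,\Lambda_2]$ subspace cannot carry all the norm; these are the same argument. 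One small point you handle more carefully than the paper: Proposition~\ref{proposition: localisation} is stated for $\Phi\in C^\infty_c$, and you explicitly note the density extension to the operator domain needed to apply it to spectral subspaces, which the paper leaves implicit.
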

\begin{remark}
  The spectral gap condition~\eqref{assumption: spectral gap} is only imposed on $\DiracOperatorOnZ_m$.
\end{remark}
\begin{proof}
  Let $E_0$ be the span of the $L^2$-eigensections of ${\DiracOperatorOnZ}_m$ with eigenvalues in $[-\Lambda_0, \Lambda_0]$ and $E_2$ with eigenvalues in $[-\Lambda_2, \Lambda_2]$.
  Let $\Pi_0 \colon L^2(Z;S_Z) \to E_0$ and $\Pi_2 \colon L^2(Z;S_Z) \to E_2$ be the $L^2$-orthogonal projections.
  The spectral gap assumption~\eqref{assumption: spectral gap} on $\DiracOperatorOnZ_m$ implies $E_0$ is a finite dimensional vector space and $E_0 = E_2$.
  Let $\Pi'_1 \colon L^2(Z';S'_{Z'}) \to L^2(Z';S'_{Z'})$ be the spectral projection for $\DiracOperatorOnZ'_m$ associated with $[-\Lambda_1, \Lambda_1]$.
  Let $E'_1 := \Image \Pi'_1$.
  We will show that $E_0 \cong E'_1$.
  \begin{center}
    \begin{tikzpicture}
      \draw [->] (-7,0) -- (7,0);
      \draw (0,0.05) -- (0,-0.05) node [below]{$0$};

      \node at (2,0) [below]{$\Lambda_0$};
      \filldraw [red] (2,0) circle [radius=0.1];
      \draw [red, ultra thick, dotted] (2.1,0) -- node [above]{spectral gap} (5.9,0);
      \draw [red] (6,0) circle [radius=0.1];
      \node at (6,0) [below]{$\Lambda_2$};

      \node at (-2,0) [below]{$-\Lambda_0$};
      \filldraw [red] (-2,0) circle [radius=0.1];
      \draw [red, ultra thick, dotted] (-2.1,0) -- node [above]{spectral gap} (-5.9,0);
      \draw [red] (-6,0) circle [radius=0.1];
      \node at (-6,0) [below]{$-\Lambda_2$};

      \draw [decorate, decoration={brace, raise=0.5}, blue, thick] (-2,0) -- node [above] {eigenvalues} (2,0);

      \draw [thick] (4,0.05) -- (4,-0.05) node at (4,0) [below]{$\Lambda_1$};
      \draw [thick] (-4,0.05) -- (-4,-0.05) node at (-4,0) [below]{$-\Lambda_1$};
    \end{tikzpicture}
  \end{center}

  Let $1 = \beta_0^2 + \beta_1^2$ and $1 = (\beta'_0)^2 + (\beta'_1)^2$ be partitions of unity in Lemma~\ref{lemma: good partitions of unity}.
  We regard, via $\tau$ and $\tilde{\tau}$, each section of $S_Z$ supported in $U_0$ as a section of $S'_{Z'}$ supported in $U'_0$.
  We define a linear map $\rho \colon E_0 \to E'_1$ by
  \begin{equation}
    \Phi \mapsto \Pi'(\beta_0 \Phi)
  \end{equation}
  and a linear map $\rho' \colon E'_1 \to E_2$ similarly.
  We will prove that $\rho$ and $\rho'$ are isomorphisms.

  We first show that $\rho$ is injective.
  Fix $\Phi \in \Ker (\rho)$.
  Assume that $\Phi \ne 0$.
  The assumption~\eqref{assumption: heavy mass} and the inequality~\eqref{eq: eigenfunction localisation} implies that $\beta_0 \Phi \ne 0$.
  We define
  \begin{equation}
    C := \max \big\{ C_0(\eta_0, \sigma_{\DiracOperatorOnZ}), C_0(\eta'_0, \sigma_{\DiracOperatorOnZ'}) \big\},
  \end{equation}
  where $C_0$ is the constant in Proposition~\ref{proposition: localisation}.
  Then, by Proposition~\ref{proposition: localisation}, we have
  \begin{equation}
    \begin{aligned}
      \norm{\DiracOperatorOnZ_m(\beta_0 \Phi)}^2 &\le \left( \Lambda_0^2 + C^2\frac{C^2 +\Lambda_0^2}{m^2} \right) \norm{\Phi}^2 \\
      &\le \left( \Lambda_0^2 + C^2\frac{C^2 + \Lambda_0^2}{m^2} \right) \left( 1 - \frac{C^2 + \Lambda_0^2}{m^2} \right)^{-1} \norm{\beta_0 \Phi}^2.
    \end{aligned}
  \end{equation}
  The assumption~\eqref{assumption: heavy mass} implies
  \begin{equation}
    \left( \Lambda_0^2 + C^2\frac{C^2 + \Lambda_0^2}{m^2} \right) \left( 1 - \frac{C^2 + \Lambda_0^2}{m^2} \right)^{-1} < \Lambda_1^2.
  \end{equation}
  Hence, we have $\norm{\DiracOperatorOnZ_m(\beta_0 \Phi)}^2 < \Lambda_1^2 \norm{\beta_0\Phi}^2$.
  Since $\beta_0 \Phi$ is supported in $U_0$ and $\DiracOperatorOnZ_m$ coincides with ${\DiracOperatorOnZ'}_m$ on $U_0$, we have $\norm{\DiracOperatorOnZ_m(\beta_0 \Phi)} = \norm{{\DiracOperatorOnZ'}_m(\beta_0 \Phi)}$.
  Thus, we have
  \begin{equation}
    \norm{\DiracOperatorOnZ'_m(\beta_0 \Phi)}^2 < \Lambda_1^2 \norm{\beta_0\Phi}^2,
  \end{equation}
  which implies $\beta_0\Phi = 0$ by the definition of $\Pi'$.
  This contradicts the assumption.
  Thus, $\rho$ is injective.

  We next show that $\rho'$ is injective.
  Fix $\Phi' \in \Ker(\rho')$.
  In the same way as above, we have
  \begin{equation}
    \begin{aligned}
      \norm{\DiracOperatorOnZ_m (\beta'_0 \Phi')}^2 = \norm{\DiracOperatorOnZ'_m(\beta'_0 \Phi')}^2 &\le \left( \Lambda_1^2 + C^2\frac{C^2+\Lambda_1^2}{m^2} \right) \norm{\Phi'}^2 \\
      &\le \left( \Lambda_1^2 + C^2\frac{C^2 +\Lambda_1^2}{m^2} \right) \left( 1 - \frac{C^2 +\Lambda_1^2}{m^2} \right)^{-1} \norm{\beta'_0 \Phi'}^2 \\
      &< \Lambda_2^2 \norm{\beta'_0 \Phi'}^2,
    \end{aligned}
  \end{equation}
  which implies $\rho'$ is also injective.

  We have now shown that $\rho \colon E_0 \to E'_1$ and $\rho' \colon E'_1 \to E_2$ are injective.
  Since $E_0$ is finite dimensional and $E_0 =E_2$, it follows that $E_0 \cong E'_1$.
  The proof is complete.
\end{proof}

\begin{theorem}\label{theorem: localisation}
  Let $(Z = U_0 \cup U_1, 1 = \eta_0 + (1-\eta_0), S_Z, \DiracOperatorOnZ, h)$ and $(Z' = U'_0 \cup U'_1, 1 = \eta'_0 + (1-\eta'_0), S'_{Z'}, \DiracOperatorOnZ', h')$ be two sets of data as above.
  We make the following assumptions:
  \begin{enumerate}
    \item $\abs{\sigma_{\DiracOperatorOnZ}(d\eta_0)}$ is bounded on $U_0 \cap U_1$, and $\abs{\sigma_{\DiracOperatorOnZ'}(d\eta_0')}$ is bounded on $U_0' \cap U_1'$.
    \item $h$ is smooth and anti-commutes with $\DiracOperatorOnZ$ on $U_1$, and $h'$ is smooth and anti-commutes with $\DiracOperatorOnZ'$ on $U_1'$.
    \item the eigenvalues of $h^2$ are greater than or equal to $1$ on $U_1$, and the eigenvalues of $(h')^2$ are greater than or equal to $1$ on $U_1'$.
    \item there exists an isometry $\tau \colon U_0 \to U'_0$ covered by a bundle isometry $\tilde{\tau} \colon \restrictedTo{S_Z}{U_0} \to \restrictedTo{S'_{Z'}}{U'_0}$ such that $\tilde{\tau}^{-1} \circ L'_m \circ \tilde{\tau} = L_m$.
    \item $S_Z$ and $S'_{Z'}$ are $\Z_2$-graded and that $\DiracOperatorOnZ$, $\DiracOperatorOnZ'$, $h$, and $h'$ are odd operators.
  \end{enumerate}
  Then, there exists a constant $C > 0$ that depends only on $\eta_0$, $\eta'_0$, $\sigma_{\DiracOperatorOnZ}$, and $\sigma_{\DiracOperatorOnZ'}$ such that the following holds.
  Fix $\Lambda > 0$ and $m >0$.
  If $\DiracOperatorOnZ_m$ is a Fredholm operator with $(\spectrum \DiracOperatorOnZ_m) \cap [-\Lambda, \Lambda] = \{0\}$ and $m > 2(C^2 + \Lambda^2)/\Lambda$, then $\DiracOperatorOnZ'_m$ is also a Fredholm operator and we have
  \begin{equation}
    \ASindex (\DiracOperatorOnZ_m) = \ASindex (\DiracOperatorOnZ'_m).
  \end{equation}
\end{theorem}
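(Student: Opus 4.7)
The plan is to reduce to Proposition~\ref{proposition: excision} by a judicious choice of spectral thresholds, and then upgrade the resulting equality of dimensions to an equality of indices by exploiting the $\Z_2$-grading.

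First I would set $\Lambda_0 := 0$, $\Lambda_1 := \Lambda/2$, and $\Lambda_2 := \Lambda$. The Fredholm property together with the hypothesis $(\spectrum \DiracOperatorOnZ_m) \cap [-\Lambda, \Lambda] = \{0\}$ will guarantee that $0$ is an isolated eigenvalue of $\DiracOperatorOnZ_m$ of finite multiplicity and that the spectral gap condition~\eqref{assumption: spectral gap} holds. An elementary algebraic check will show that the hypothesis $m > 2(C^2+\Lambda^2)/\Lambda$ implies the condition~\eqref{assumption: heavy mass} for this choice of thresholds. Proposition~\ref{proposition: excision} will then provide an isomorphism $\rho \colon E_0 \to E'_1$, where $E_0 := \Ker \DiracOperatorOnZ_m$ and $E'_1$ is the spectral subspace of $\DiracOperatorOnZ'_m$ associated with $[-\Lambda/2, \Lambda/2]$. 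In particular $\DiracOperatorOnZ'_m$ will have only discrete spectrum in this interval, so $0$ is not in its essential spectrum, and $\DiracOperatorOnZ'_m$ is Fredholm.

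Next I would invoke oddness. Since $\gradingOperator$ anti-commutes with $\DiracOperatorOnZ'_m$, it maps the $(\lambda, +)$-eigenspace isomorphically onto the $(-\lambda, -)$-eigenspace for every nonzero eigenvalue $\lambda$, so the non-kernel contributions cancel in pairs and give
\begin{equation}
  \ASindex \DiracOperatorOnZ'_m = \tr \big( \restrictedTo{\gradingOperator}{\Ker \DiracOperatorOnZ'_m} \big) = \tr \big( \restrictedTo{\gradingOperator}{E'_1} \big),
\end{equation}
and likewise $\ASindex \DiracOperatorOnZ_m = \tr \big( \restrictedTo{\gradingOperator}{E_0} \big)$. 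It will then suffice to verify that the isomorphism $\rho \colon \Phi \mapsto \Pi'(\beta_0 \Phi)$ supplied by the proof of Proposition~\ref{proposition: excision} preserves the grading. This should follow because $\beta_0$ is a scalar cut-off; the spectral projection $\Pi'$ commutes with $\gradingOperator$, the defining window $[-\Lambda/2, \Lambda/2]$ being symmetric around $0$; and the intertwining bundle isometry $\tilde\tau$ of assumption~(iv) respects the $\Z_2$-grading. Grading-preservation of $\rho$ forces $\dim E_{0,\pm} = \dim E'_{1,\pm}$, so the two traces, and with them the two indices, must agree.

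The main obstacle I anticipate is bookkeeping around $\tilde\tau$. Assumption~(v) only declares the operators to be odd, and a priori a bundle isometry intertwining odd operators could swap the two graded summands, which would turn $\rho$ into a grading-reversing isomorphism and thereby negate rather than preserve the index. The natural reading of assumption~(iv)---the one consistent with the subsequent use of the theorem in this paper---is that $\tilde\tau$ splits as a direct sum with respect to the gradings, and this interpretation is precisely what makes the last step of the argument go through.
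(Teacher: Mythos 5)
Your proposal is correct and follows essentially the same route as the paper's own proof: set $\Lambda_0 := 0$, $\Lambda_2 := \Lambda$, choose an intermediate $\Lambda_1$, check that $m > 2(C^2+\Lambda^2)/\Lambda$ forces the heavy-mass inequality~\eqref{assumption: heavy mass}, feed the data into Proposition~\ref{proposition: excision}, and then observe that because the operators are odd the resulting isomorphism of spectral subspaces respects the $\Z_2$-grading so that the super-traces (and hence the indices) agree. The paper takes $\Lambda_1 = \Lambda/\sqrt{2}$ where you take $\Lambda_1 = \Lambda/2$; both choices satisfy~\eqref{assumption: heavy mass} under the stated bound on $m$, so this is a cosmetic difference. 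Your more detailed explanation of why the trace over $E'_1$ collapses to the trace over $\Ker \DiracOperatorOnZ'_m$ — nonzero eigenvalues in a symmetric window pair off under $\Gamma$ and contribute zero — is exactly what underlies the paper's one-line remark that spectral projections commute with grading operators. You are also right that assumption~(iv) must be read as saying $\tilde\tau$ is even, i.e.\ intertwines the gradings; the paper leaves this implicit, and in its application the lift is built from an even Clifford element (a $\Spin(2)$ element), which does commute with $\gradingOperator$.
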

\begin{proof}
  Fix $\Lambda > 0$ and $m >0$ with $(\spectrum \DiracOperatorOnZ_m) \cap [-\Lambda, \Lambda] = \{0\}$ and $m > 2(C^2 + \Lambda^2)/\Lambda$.
  Set $\Lambda_2 := \Lambda$, $\Lambda_1 := \Lambda/\sqrt{2}$, and $\Lambda_0 := 0$.
  Then, $(\Lambda_2, \Lambda_1, \Lambda_0)$ satisfies \eqref{assumption: spectral gap} and \eqref{assumption: heavy mass} of Proposition~\ref{proposition: excision}.
  Note that spectral projections commute with grading operators.
  Thus, we conclude from Proposition~\ref{proposition: excision} that $\ASindex (\DiracOperatorOnZ_m) = \ASindex (\DiracOperatorOnZ'_m)$.
\end{proof}

\section{The main theorem}

\subsection{Notation}

We define $c$, $\epsilon$, and $\gradingOperator$ by
\begin{equation}\label{eq: clifford generators}
  c = \begin{pmatrix}0 & 1 \\ -1 & 0\end{pmatrix}, \quad
  \epsilon = \begin{pmatrix}0 & 1 \\ 1 & 0\end{pmatrix}, \quad \text{ and }
  \gradingOperator = \begin{pmatrix}1 & 0 \\ 0 & -1\end{pmatrix}.
\end{equation}
Then, $c^2 = -1$, $\epsilon^2 = \gradingOperator^2 = 1$, $\gradingOperator = c\epsilon$, and they anti-commute.

Let $X$ be a closed oriented Riemannian manifold with $\dim X$ even.
Let $S$ be a $\Z_2$-graded hermitian vector bundle on $X$, and $\gradingOperator_S$ its $\Z_2$-grading operator.
Let $D \colon C^{\infty}(X;S) \to C^{\infty}(X;S)$ be a first-order, formally self-adjoint, elliptic partial differential operator that anti-commutes with $\gradingOperator_S$.
Let $Y \subset X$ be a separating submanifold that decomposes $X$ into two compact manifolds $X_+$ and $X_-$ with common boundary $Y$.
Let $\kappa \colon X \to [-1,1]$ be an $L^{\infty}$-function such that $\kappa \equiv \pm 1$ on $X_{\pm} \setminus Y$.

We assume that $Y$ has a collar neighbourhood isometric to the standard product $(-4,4) \times Y$ and satisfying $((-4,4) \times Y) \cap X_+ = [0,4) \times Y$.
The coordinate along $(-4,4)$ is denoted by $u$.
\begin{center}
  \begin{tikzpicture}
    \draw pic {XMinus} pic {finiteNeckY} pic {XPlus};
  \end{tikzpicture}
\end{center}
We also assume that $S$ and $D$ are standard in the following sense:
there exist a hermitian bundle $E$ on $Y$ and a bundle isometry from $\restrictedTo{S}{(-4,4) \times Y}$ to $\C^2 \otimes E$ such that, under this isometry, $\gradingOperator_S$ corresponds to $\gradingOperator \otimes \id_E$ and $D$ takes the form
\begin{equation}
  D = c \otimes \partial_u + \epsilon \otimes \DiracOperatorOnY =
  \begin{pmatrix}
    0 & \partial_u + \DiracOperatorOnY \\
    -\partial_u + \DiracOperatorOnY & 0
  \end{pmatrix},
\end{equation}
where $\DiracOperatorOnY \colon C^{\infty}(Y;E) \to C^{\infty}(Y;E)$ is a formally self-adjoint, elliptic partial differential operator.
In this paper, we will concentrate on the case when $\DiracOperatorOnY$ has no zero eigenvalues, and assume this condition.

\subsection{Spectral gaps}

As a first step, we will consider the spectral gap of domain-wall fermion Dirac operators.
We begin with the one-dimensional operator
\begin{equation}
  c \partial_t + m\signfunction_0\epsilon =
  \begin{pmatrix}
    0 & \partial_t + m \signfunction_0 \\
    -\partial_t + m \signfunction_0 & 0
  \end{pmatrix}
  \colon C^{\infty}_c(\R;\C^2) \to L^2(\R;\C^2),
\end{equation}
where $\signfunction_0 \colon \R \to \R$ is a sign function such that $\signfunction_0(\pm t) = \pm 1$ for $t > 0$.
It is well-known that the operator $(c \partial_t + m \signfunction_0\epsilon)$ is essentially self-adjoint on $L^2(\R;\C^2)$ and that it has essential spectrum equal to $(-\infty, -m] \cup [m,\infty)$ and $0$ is a unique and simple eigenvalue.
See, for example,~\cite{MR3633291}*{Theorem 4.2}.
We note that
\begin{equation}
  \frac{d}{dt} e^{-m\abs{t}} = -m \signfunction_0 e^{-m\abs{t}},
\end{equation}
for any $m > 0$.
Let $v_- = \transpose{(0,1)}$, which satisfies $\gradingOperator v_- = - v_-$.
Then, $e^{-m\abs{t}}v_-$ satisfies
\begin{equation}
  \big( c \partial_t + m\signfunction_0\epsilon \big) \big( e^{-m\abs{t}}v_- \big) = 0,
\end{equation}
which is called the Jackiw-Rebbi solution~\cite{MR0468803}.
\begin{center}
  \begin{tikzpicture}
    \draw [->] (-7,0) -- (7,0) node [right]{$t$};
    \draw [->] (0,-1.5) -- (0,1.5);
    \node at (0,0) [below left]{$O$};
    \draw [red]
      (-7,-1.2) -- (0,-1.2) -- (0,1.2) -- (7,1.2)
      node [above]{$m\signfunction_0$};
    \draw
      [domain=0:7] plot (\x,{exp(-\x)})
      [domain=-7:0] plot (\x,{exp(\x)})
      node at (-1,1) {$e^{-m\abs{t}}$};
  \end{tikzpicture}
\end{center}

We next consider the domain-wall fermion Dirac operator
\begin{equation}
  (c \otimes \partial_u + \epsilon \otimes \DiracOperatorOnY) + \gradingOperator \otimes (m \signfunction_0) \colon C^{\infty}_c(\R \times Y; \C^2 \otimes E) \to L^2(\R \times Y; \C^2 \otimes E)
\end{equation}
on $\R \times Y$, which is also essentially self-adjoint on $L^2(\R \times Y; \C^2 \otimes E)$.
Assume $\DiracOperatorOnY$ has no zero eigenvalues.
Let $\firstPositiveEigenvalueOfY$ be the positive square root of the first non-zero eigenvalue of $\DiracOperatorOnY^2$.
By the method of separation of variables~\cite{MR751959}*{Theorem VIII.33}, we have
\begin{equation}
  \spectrum \big[ (c \otimes \partial_u + \epsilon \otimes \DiracOperatorOnY) + \gradingOperator \otimes (m \signfunction_0) \big] \cap (-\firstPositiveEigenvalueOfY, \firstPositiveEigenvalueOfY) = \emptyset
\end{equation}
for any $m > \firstPositiveEigenvalueOfY$.
We now proceed to the domain-wall fermion Dirac operator on $X$ via Proposition~\ref{proposition: excision}.
Recall that $D = c \otimes \partial_u + \epsilon \otimes \DiracOperatorOnY$ on the neck $(-4,4) \times Y \subset X$.
\begin{proposition}\label{proposition: spectral gap of DWFD operator}
  Assume $\DiracOperatorOnY$ has no zero eigenvalues.
  Let $\firstPositiveEigenvalueOfY$ be the positive square root of the first non-zero eigenvalue of $\DiracOperatorOnY^2$.
  Then, there exists a constant $m_1 > 0$ that depends only on $\firstPositiveEigenvalueOfY$ such that we have
  \begin{equation}
    \spectrum (D + m \kappa \gradingOperator_S) \cap \left( -\frac{\firstPositiveEigenvalueOfY}{2}, \frac{\firstPositiveEigenvalueOfY}{2} \right) = \emptyset
  \end{equation}
  for any $m > m_1$.
\end{proposition}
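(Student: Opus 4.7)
The plan is to apply the excision theorem (Proposition~\ref{proposition: excision}) to transfer the explicit spectral gap of the model operator ${\DiracOperatorOnZ}_m := (c\otimes\partial_u + \epsilon\otimes\DiracOperatorOnY) + \gradingOperator\otimes(m\signfunction_0)$ on $\R \times Y$, just established above via separation of variables, to the operator $D + m\kappa\gradingOperator_S$ on $X$. Under the standing assumption that $\DiracOperatorOnY$ has no zero eigenvalues, the Jackiw--Rebbi bound states contribute eigenvalues of the form $\pm|\mu|$ for $\mu \in \spectrum \DiracOperatorOnY$, all of absolute value at least $\firstPositiveEigenvalueOfY$, while the continuous spectrum lies beyond $\pm\sqrt{\firstPositiveEigenvalueOfY^2 + m^2}$; in particular $0 \notin \spectrum {\DiracOperatorOnZ}_m$. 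The crucial geometric input is that on the collar $(-4,4)\times Y \subset X$ we have $\kappa(u,y) = \signfunction_0(u)$, so $D + m\kappa\gradingOperator_S$ coincides with ${\DiracOperatorOnZ}_m$ on this overlap.

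To set up Proposition~\ref{proposition: excision}, take $Z = \R\times Y$ with bundle $\C^2\otimes E$, Dirac part $\DiracOperatorOnZ = c\otimes\partial_u + \epsilon\otimes\DiracOperatorOnY$, and potential $h = \gradingOperator\otimes\signfunction_0$; and $Z' = X$ with bundle $S$, Dirac part $\DiracOperatorOnZ' = D$, and potential $h' = \kappa\gradingOperator_S$. Choose the open cover $U_0 = (-3,3)\times Y$, $U_1 = (\R\setminus[-2,2])\times Y$ of $Z$, together with a smooth function $\eta_0 \colon Z \to [0,1]$ supported in $U_0$ and equal to $1$ on $[-2,2]\times Y$; transfer these data to $Z'$ via the collar isometry, extending $\eta_0'$ by $0$ outside the neck and setting $U_1' := X \setminus ([-2,2]\times Y)$. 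The bundle isometry $\tilde\tau$ intertwining ${\DiracOperatorOnZ}_m$ and ${\DiracOperatorOnZ'}_m$ on $U_0 \cong U_0'$ is furnished by the standing assumption on the neck. The hypotheses are now routine to verify: $d\eta_0$ and $d\eta_0'$ are bounded, so the symbol quantities $\abs{\sigma_{\DiracOperatorOnZ}(d\eta_0)}$ and $\abs{\sigma_{\DiracOperatorOnZ'}(d\eta_0')}$ are bounded on the overlaps; on $U_1$ and $U_1'$ the potentials reduce to $\pm\gradingOperator\otimes\id_E$ and $\pm\gradingOperator_S$, which are smooth, square to the identity, and anti-commute with the respective Dirac operators by oddness.

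Proposition~\ref{proposition: excision} therefore supplies a constant $C > 0$ depending only on the fixed geometric data. Choose $\Lambda_0 = \firstPositiveEigenvalueOfY/4$, $\Lambda_1 = \firstPositiveEigenvalueOfY/2$, $\Lambda_2 = 3\firstPositiveEigenvalueOfY/4$; the one subtlety is to keep $\Lambda_2$ strictly below $\firstPositiveEigenvalueOfY$ so as to exclude the Jackiw--Rebbi eigenvalues $\pm\firstPositiveEigenvalueOfY$, which ensures both $\spectrum {\DiracOperatorOnZ}_m \cap \big([-\Lambda_2, -\Lambda_0) \cup (\Lambda_0, \Lambda_2]\big) = \emptyset$ and the (vacuous) discreteness of $\spectrum {\DiracOperatorOnZ}_m \cap [-\Lambda_0, \Lambda_0] = \emptyset$. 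Let $m_1$ be the smallest $m > \firstPositiveEigenvalueOfY$ for which~\eqref{assumption: heavy mass} holds with these $\Lambda_i$; by construction $m_1$ depends only on $C$ and $\firstPositiveEigenvalueOfY$. For $m > m_1$ Proposition~\ref{proposition: excision} then concludes that $\spectrum (D + m\kappa\gradingOperator_S) \cap [-\firstPositiveEigenvalueOfY/2, \firstPositiveEigenvalueOfY/2]$ is a finite set whose cardinality equals $\#(\spectrum {\DiracOperatorOnZ}_m \cap [-\firstPositiveEigenvalueOfY/4, \firstPositiveEigenvalueOfY/4]) = 0$, which is the desired gap. The main obstacle is not substantive but bookkeeping: placing the three thresholds so as to avoid the boundary Jackiw--Rebbi modes of the model while still satisfying the quantitative mass inequality of Proposition~\ref{proposition: excision}.
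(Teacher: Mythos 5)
Your proof follows the same strategy as the paper: apply Proposition~\ref{proposition: excision} with the model operator $(c\otimes\partial_u + \epsilon\otimes\DiracOperatorOnY) + \gradingOperator\otimes(m\signfunction_0)$ on $\R\times Y$ as $Z$ and the domain-wall operator on $X$ as $Z'$, excising along the collar neighbourhood of $Y$ on which the two operators agree, and then choosing thresholds $\Lambda_0<\Lambda_1<\Lambda_2$ inside the known gap of the model operator to conclude that $D + m\kappa\gradingOperator_S$ has no spectrum in $(-\firstPositiveEigenvalueOfY/2, \firstPositiveEigenvalueOfY/2)$ once $m$ exceeds the threshold~\eqref{assumption: heavy mass}.

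There is one place where you are actually more careful than the paper, and it is worth noting. The paper takes $\Lambda_0=0$, $\Lambda_1=\firstPositiveEigenvalueOfY/2$, $\Lambda_2=\firstPositiveEigenvalueOfY$. But the separation-of-variables statement only gives $\spectrum \DiracOperatorOnZ_m \cap (-\firstPositiveEigenvalueOfY, \firstPositiveEigenvalueOfY) = \emptyset$ on the \emph{open} interval, and indeed $\pm\firstPositiveEigenvalueOfY$ are genuine eigenvalues of the model operator (the Jackiw--Rebbi bound state tensored with the lowest eigenmode of $\DiracOperatorOnY$). The spectral-gap hypothesis~\eqref{assumption: spectral gap} of Proposition~\ref{proposition: excision} requires the intersection with the half-open set $(\Lambda_0,\Lambda_2]$ to be empty, so $\Lambda_2$ must be kept strictly below $\firstPositiveEigenvalueOfY$; otherwise $E_2 \supsetneq E_0$ and the counting argument in the proof of Proposition~\ref{proposition: excision} breaks down. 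Your choice $\Lambda_2 = 3\firstPositiveEigenvalueOfY/4$ (with $\Lambda_1 = \firstPositiveEigenvalueOfY/2$, $\Lambda_0 = \firstPositiveEigenvalueOfY/4$) cleanly avoids this; the paper's choice is harmless in substance because one can always shrink $\Lambda_2$ slightly without changing the conclusion, but as literally stated it does not satisfy the closed-endpoint condition. Your verification of the cover, the cut-off $\eta_0$, and the smoothness and anti-commutation of $h, h'$ on $U_1, U_1'$ is routine and correct. Overall this is the paper's argument, written out with a small but genuine repair of the threshold bookkeeping.
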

\begin{proof}
  We apply Proposition~\ref{proposition: excision} for $\big((c \otimes \partial_u + \epsilon \otimes \DiracOperatorOnY) + \gradingOperator \otimes (m \signfunction_0)\big)$ on $\R \times Y$ and $(D + m \kappa \gradingOperator_S)$ on $X$ with $U_0 = U'_0 = (-4,4) \times Y$.
  Let $\Lambda_2 := \firstPositiveEigenvalueOfY$, $\Lambda_1 := \firstPositiveEigenvalueOfY/2$, and $\Lambda_0 := 0$, and we have $C > 0$ of Proposition~\ref{proposition: excision}.
  We set
  \begin{equation}
    m_1^2 := \max \left\{ \frac{(C^2+\Lambda_2^2)(C^2+\Lambda_1^2)}{\Lambda_2^2-\Lambda_1^2}, \frac{(C^2+\Lambda_1^2)(C^2+\Lambda_0^2)}{\Lambda_1^2-\Lambda_0^2}, (C^2+\Lambda_2^2) \right\},
  \end{equation}
  which yields the conclusion.
\end{proof}

\subsection{Product formula}

Next, we will modify $D$ on $X_+$.
Let $\XPlusHat := (-\infty,0] \times Y \cup X_+$ with the standard cylindrical-end metric.
The bundle $S$ and the operator $D$ naturally extends to $\XPlusHat$, which will be denoted by $\SonXPlusHat$ and $\DiracOperatorOnXPlusHat$.
\begin{center}
  \begin{tikzpicture}
    \draw pic {infiniteHalfCylinderY} pic {XPlus};
  \end{tikzpicture}
\end{center}
Recall~\cite{MR0397797}*{Corollary (3.14)} that $\DiracOperatorOnXPlusHat \colon L^2(\XPlusHat; \SonXPlusHat) \to L^2(\XPlusHat; \SonXPlusHat)$ is a Fredholm operator if $\DiracOperatorOnY$ has no zero eigenvalues; thus, there exists $\firstPositiveEigenvalueOfXPlusHat > 0$ such that $\spectrum \DiracOperatorOnXPlusHat \cap (-\firstPositiveEigenvalueOfXPlusHat, \firstPositiveEigenvalueOfXPlusHat) = \{0\}$.

Let $\signfunction \colon \R \times \XPlusHat \to [-1,1]$ be an $L^{\infty}$-function such that $\signfunction \equiv -1$ on $(-\infty,0) \times \XPlusHat$ and $\signfunction \equiv 1$ on  $(0,\infty) \times \XPlusHat$.
We consider a bundle $\C^2 \otimes \SonXPlusHat$ on $\R \times \XPlusHat$ equipped with a $\Z_2$-grading operator $\gradingOperator \otimes \id_S$ and an odd operator
\begin{equation}
  \epsilon \otimes (\DiracOperatorOnXPlusHat + m\signfunction \gradingOperator_S) + c \otimes \partial_t=
  \begin{pmatrix}
    0 & (\DiracOperatorOnXPlusHat + m\signfunction\gradingOperator_S) + \partial_t \\
    (\DiracOperatorOnXPlusHat + m\signfunction\gradingOperator_S) - \partial_t & 0
  \end{pmatrix},
\end{equation}
which is self-adjoint on $L^2(\R \times \XPlusHat; \C^2 \otimes \SonXPlusHat)$.
Note that this operator is a coordinate change of the graded tensor product of $(c \partial_t + m\signfunction_0\epsilon)$ and $\DiracOperatorOnXPlusHat$.
\begin{center}
  \begin{tikzpicture}
    \pic {infiniteCylinderXPlusHat};
    \draw [blue]
      [ultra thick] (0,-2) -- (0,1)
      node at (0,0) [below right]{$\{0\} \times \XPlusHat$};
    \filldraw [red]
      [very nearly transparent] (-6,1) -- (0,1) -- (0,-2) -- (-6,-2);
    \filldraw [red]
      [nearly transparent] (6,1) -- (0,1) -- (0,-2) -- (6,-2);
    \draw [red]
      node at (-3,0) [below]{$\signfunction \equiv -1$}
      node at (3,0) [above]{$\signfunction \equiv +1$};
  \end{tikzpicture}
\end{center}

\begin{proposition}\label{proposition: product formula}
  If $\DiracOperatorOnY$ has no zero eigenvalues, then the operator $\big( \epsilon \otimes (\DiracOperatorOnXPlusHat + m\signfunction \gradingOperator_S) + c \otimes \partial_t \big)$ is also Fredholm, and we have
  \begin{equation}
    \ASindex(\DiracOperatorOnXPlusHat) = - \ASindex \big[\epsilon \otimes (\DiracOperatorOnXPlusHat + m\signfunction \gradingOperator_S) + c \otimes \partial_t \big]
  \end{equation}
  for any $m > 0$, and
  \begin{equation}
    \spectrum \big[ \epsilon \otimes (\DiracOperatorOnXPlusHat + m\signfunction \gradingOperator_S) + c \otimes \partial_t \big] \cap (-\firstPositiveEigenvalueOfXPlusHat, \firstPositiveEigenvalueOfXPlusHat) = \{0\}
  \end{equation}
  for any $m > \firstPositiveEigenvalueOfXPlusHat$.
\end{proposition}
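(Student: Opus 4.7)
Following the hint in the paragraph after the displayed form of $\mathcal{D}$, the plan is to realise $\mathcal{D} := \epsilon \otimes (\DiracOperatorOnXPlusHat + m\signfunction\gradingOperator_S) + c \otimes \partial_t$ essentially as the graded tensor product of the one-dimensional operator $L := c\partial_t + m\signfunction\epsilon$ on $L^2(\R;\C^2)$ (whose spectrum $\{0\}\cup((-\infty,-m]\cup[m,\infty))$ and Jackiw--Rebbi zero mode $e^{-m|t|}v_-$ were reviewed just above) and $\DiracOperatorOnXPlusHat$, and to deduce the three claims by separation of variables. Writing $\mathcal{D}$ in off-diagonal form in the $\gradingOperator\otimes\id_S$ grading as
\[
\mathcal{D} = \begin{pmatrix} 0 & \mathcal{D}_- \\ \mathcal{D}_+ & 0 \end{pmatrix}, \qquad \mathcal{D}_\pm := (\DiracOperatorOnXPlusHat + m\signfunction\gradingOperator_S) \mp \partial_t,
\]
the key calculation uses $\DiracOperatorOnXPlusHat \gradingOperator_S + \gradingOperator_S \DiracOperatorOnXPlusHat = 0$, $\signfunction^2 \equiv 1$, and (distributionally) $\partial_t \signfunction = 2\delta_0$ to yield
\[
\mathcal{D}_\mp \mathcal{D}_\pm = -\partial_t^2 + \DiracOperatorOnXPlusHat^2 + m^2 \pm 2m\delta_0\gradingOperator_S.
\]
Since $\gradingOperator_S$ anti-commutes with $\DiracOperatorOnXPlusHat$, the orthogonal splitting $L^2(\XPlusHat; \SonXPlusHat) = \Ker\DiracOperatorOnXPlusHat \oplus H'$, with $\abs{\DiracOperatorOnXPlusHat}\geq\firstPositiveEigenvalueOfXPlusHat$ on $H'$, is preserved by $\mathcal{D}_\mp\mathcal{D}_\pm$; moreover $\gradingOperator_S$ restricts to an involution on $\Ker\DiracOperatorOnXPlusHat = K^+\oplus K^-$ (with $K^\pm = \Ker(\DiracOperatorOnXPlusHat)_\pm$) and exchanges the $\pm\mu$-eigenspaces on $H'$.

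On $K^\pm$ (where $\gradingOperator_S = \pm 1$ and $\DiracOperatorOnXPlusHat = 0$), $\mathcal{D}_\mp\mathcal{D}_\pm$ reduces to the one-dimensional point-interaction Schr\"odinger operator $-\partial_t^2 + m^2 \pm 2m\delta_0$: the attractive sign has a unique normalised bound state $e^{-m\abs{t}}$ at energy $0$ and essential spectrum $[m^2,\infty)$, while the repulsive sign has only the essential spectrum $[m^2,\infty)$. On the $\abs{\DiracOperatorOnXPlusHat}=\mu$ subspace of $H'$ I would diagonalise $\gradingOperator_S$ in the two-dimensional span of any pair $\{\psi_\mu, \gradingOperator_S\psi_\mu\}$, reducing $\mathcal{D}_\mp\mathcal{D}_\pm$ to $-\partial_t^2 + \mu^2 + m^2 \pm 2m\delta_0$, whose spectrum lies in $[\mu^2,\infty)\subseteq[\firstPositiveEigenvalueOfXPlusHat^2,\infty)$ (the attractive ground state sits at $\mu^2+m^2-m^2=\mu^2$). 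Consequently $\Ker\mathcal{D}_\pm$ is finite-dimensional and entirely supported on the $\Ker\DiracOperatorOnXPlusHat$ part with $\dim\Ker\mathcal{D}_\pm = \dim K^\mp$, so
\[
\ASindex\mathcal{D} = \dim K^- - \dim K^+ = -\ASindex\DiracOperatorOnXPlusHat
\]
for any $m>0$. Fredholmness follows from the essential-spectrum bound $\spectrum\mathcal{D}^2 \subseteq \{0\}\cup[\min(m,\firstPositiveEigenvalueOfXPlusHat)^2,\infty)$, and for $m>\firstPositiveEigenvalueOfXPlusHat$ both parts confine the spectrum to $\{0\}\cup[\firstPositiveEigenvalueOfXPlusHat^2,\infty)$, giving the stated spectral gap.

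The principal technical obstacle is making the formal identity $\partial_t\signfunction = 2\delta_0$ rigorous. I would handle this through quadratic forms: $\mathcal{D}$ is essentially self-adjoint on $C^\infty_c(\R\times\XPlusHat; \C^2\otimes\SonXPlusHat)$ because it differs from the self-adjoint direct-sum Dirac operator only by the bounded multiplication $m\signfunction\gradingOperator_S$, so the products $\mathcal{D}_\mp\mathcal{D}_\pm$ are well defined as positive self-adjoint operators via the forms $\norm{\mathcal{D}_\pm\phi}^2$. Integrating by parts in $t$ splits $\norm{\mathcal{D}_\pm\phi}^2$ into the smooth bulk piece $\int(\abs{\partial_t\phi}^2 + \abs{\DiracOperatorOnXPlusHat\phi}^2 + m^2\abs{\phi}^2)$ plus a boundary term $\pm 2m\bracket{\phi(0,\cdot)}{\gradingOperator_S\phi(0,\cdot)}_{L^2(\XPlusHat)}$, which is exactly the standard point-interaction form realising $\pm 2m\delta_0\gradingOperator_S$. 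The classical spectral analysis of $-\partial_t^2 \pm 2m\delta_0$ on $\R$ (one attractive bound state $e^{-m\abs{t}}$ at energy $-m^2$, no repulsive bound state, essential spectrum $[0,\infty)$) then plugs into the reduction above and closes the argument.
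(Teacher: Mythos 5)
Your argument is correct and takes essentially the same separation-of-variables route as the paper, which notes the graded tensor-product structure with the Jackiw--Rebbi operator $c\partial_t + m\signfunction_0\epsilon$, exhibits the explicit zero modes $\bigl(e^{-m\abs{t}}\phi_-, e^{-m\abs{t}}\phi_+\bigr)$ for $\phi \in \Ker\DiracOperatorOnXPlusHat$, and leaves the remaining bookkeeping to the reader; you carry out that bookkeeping, including a careful quadratic-form realisation of the $\delta_0$ potential. One imprecision to watch: $\DiracOperatorOnXPlusHat$ on the non-compact cylindrical-end manifold $\XPlusHat$ has continuous essential spectrum, so ``the $\abs{\DiracOperatorOnXPlusHat}=\mu$ subspace of $H'$'' does not literally exist and you cannot diagonalise $\DiracOperatorOnXPlusHat$ by eigenfunctions there. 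The fix is already latent in your form computation: since $\realpart\bracket{\DiracOperatorOnXPlusHat\phi}{(\partial_t \mp m\signfunction\gradingOperator_S)\phi}_{L^2} = 0$, the square $\norm{\mathcal{D}_\pm\phi}^2$ splits orthogonally as $\norm{\DiracOperatorOnXPlusHat\phi}^2 + \norm{(\partial_t \mp m\signfunction\gradingOperator_S)\phi}^2$, and the first summand alone already gives $\norm{\mathcal{D}_\pm\phi}^2 \ge \firstPositiveEigenvalueOfXPlusHat^2\norm{\phi}^2$ for $\phi$ valued in $H'$ without any eigenfunction decomposition, while on $L^2(\R)\otimes\Ker\DiracOperatorOnXPlusHat$ the first summand vanishes and the second reduces exactly to the $\Z_2$-graded one-dimensional delta-interaction operator whose kernel and spectral gap you identified.
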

\begin{proof}
  Assume $\DiracOperatorOnXPlusHat \phi = 0$.
  Set $\phi_{\pm} := (\phi \pm \gradingOperator_S \phi)/2$.
  Recall that $\big(e^{-m\abs{t}}\big)' = -m \signfunction_0 e^{-m\abs{t}}$ for any $m > 0$.
  Then, we have
  \begin{equation}
    \begin{pmatrix}
    0 & (\DiracOperatorOnXPlusHat + m\signfunction\gradingOperator_S) + \partial_t \\
    (\DiracOperatorOnXPlusHat + m\signfunction\gradingOperator_S) - \partial_t & 0
  \end{pmatrix}
  \begin{pmatrix}
    e^{-m\abs{t}} \phi_- \\
    e^{-m\abs{t}} \phi_+
  \end{pmatrix}
  = 0.
  \end{equation}
  The details are left to the reader.
\end{proof}

\subsection{Embeddings into a cylinder}\label{subsection: Embeddings}

At the heart of this paper lies our next step, which constructs an embedding $\tau$ from $(-2,2) \times \XPlusHat$ into the infinite cylinder $\R \times X$.
\begin{center}
  \begin{tikzpicture}
    \pic {infiniteCylinderX};
    \filldraw [blue, nearly transparent]
      (6,0.125) -- (0.125,0.125) -- (0.125,0.5335) arc [start angle=-60, end angle=240, radius=0.25] -- (-0.125,-0.125) -- (6,-0.125);
    \draw [blue]
      node at (-0.125,0) [below left]{$\tau \big( (-2,2) \times \XPlusHat \big)$};
  \end{tikzpicture}
\end{center}

Let $R_1 := (-2,2) \times (-\infty, 4)$ and $R_2 := \R \times (-4,4)$.
We denote the coordinates of $R_1$ by $(t,u)$ and that of $R_2$ by $(s,v)$.
Fix an embedding $\tau_{\R^2} \colon R_1 \to R_2$ such that $\tau_{\R^2} \equiv \id$ for $2 \le u$ and
\begin{equation}
  \begin{pmatrix}
    t \\
    u
  \end{pmatrix}
  \mapsto
  \begin{pmatrix}
    -u \\
    t
  \end{pmatrix}
\end{equation}
for $u \le -100$.
Since $Y$ has a collar neighbourhood isometric to $(-4,4) \times Y$, we can regard $R_1 \times Y$ and $R_2 \times Y$ as open subsets of $(-2,2) \times \XPlusHat$ and $\R \times X$ respectively.
We define an embedding
\begin{equation}
  \tau \colon (-2,2) \times \XPlusHat \to \R \times X
\end{equation}
by $\tau \equiv \id_{\R} \times \id_X$ on $(-2,2) \times X_+$ and $\tau \equiv \tau_{\R^2} \times \id_Y$ on $R_1 \times Y$.
By construction, $\tau$ is an isometry outside a compact set $\big( (-2,2) \times (-100, 2) \times Y \big)$.
\begin{center}
  \begin{tikzpicture}
    \draw [gray, very thin] (-2.5,-5.5) grid (5.5,2.5);
    \draw [gray, ->] (-2.5,0) -- (5.7,0) node [right]{$s$};
    \draw [gray, ->] (0,-5.5) -- (0,2.5) node [above]{$v$};
    \draw [red]
      (-1,2) -- (-1,-5.5)
      (1,2) -- (1,-5.5);
    \filldraw [red] [very nearly transparent] (-1,2) -- (1,2) -- (1,-5.5) -- (-1,-5.5);
    \node [red] at (0.5,-4.5) {$R_1$};
    \draw [blue]
      (5.5,1) -- (1,1) -- (1,2)
      (-1,1) arc [start angle = 180, end angle=270, radius=2] -- (5.5,-1);
    \filldraw [blue] [very nearly transparent] (5.5,1) -- (1,1) -- (1,2) -- (-1,2) -- (-1,1) arc [start angle = 180, end angle=270, radius=2] -- (5.5,-1);
    \node [blue] at (4.5,-0.5) {$\tau_{\R^2}(R_1)$};
    \draw [thick]
      (-2.5,2) -- (5.5,2)
      (-2.5,-2) -- (5.5,-2);
    \filldraw [gray] [ultra nearly transparent] (-2.5,2) -- (5.5,2) -- (5.5,-2) -- (-2.5,-2);
    \node [gray] at (-1.5,1.5) {$R_2$};
  \end{tikzpicture}
\end{center}

We modify the Riemannian metric on $\R \times X$ so that $\tau$ becomes an isometry.
Let $g$ denote the product metric on $(-2,2) \times \XPlusHat$ and $g'$ on $\R \times X$.
Let $\chi \colon \R \times X \to [0,1]$ be a bump function such that $\chi \equiv 1$ on $\tau \big( (-1,1) \times \XPlusHat \big)$ and $\chi \equiv 0$ outside $\tau \big( (-2,2) \times \XPlusHat \big)$.
We define a family of Riemannian metrics $g'_r$ connecting $g'_0 := g'$ to $g'_1 := \chi \big( (\tau^{-1})^*g \big) + (1-\chi)g'$ by
\begin{equation}
  g'_r := (1-r) g' + r \big( \chi \big( (\tau^{-1})^*g \big) + (1-\chi)g' \big)
\end{equation}
for $r \in [0,1]$.
Now $\tau$ is an isometry from $\big( (-1,1) \times \XPlusHat, g \big)$ to $\big( \R \times X, g'_1 \big)$.
We also modify the hermitian metric on $S$ conformally so that the $L^2$-norm on $C^{\infty}_c(\R \times X; \C^2 \otimes S)$ remains unchanged.
Let $d\mu(g'_r)$ denote the volume form of $\big( \R \times X, g'_r \big)$, and we define $f_r \colon \R \times X \to \R$ by $d\mu(g'_r) = e^{2f_r}d\mu(g')$.
Let $\innerProduct{\cdot}{\cdot}$ denote the hermitian metric on $S$.
We define a family of hermitian metrics $\innerProduct{\cdot}{\cdot}_r$ on $S$ by
\begin{equation}
  \innerProduct{\cdot}{\cdot}_r := e^{-2f_r} \innerProduct{\cdot}{\cdot}
\end{equation}
for $r \in [0,1]$.
Now the $L^2$-norm on $C^{\infty}_c(\R \times X; \C^2 \otimes S)$ with $d\mu(g'_r)$ and $\innerProduct{\cdot}{\cdot}_r$ remains unchanged.
Using $\Spin(2)$-action on $\C^2$, we can lift $\tau$ to $\tilde{\tau} \colon \C^2 \otimes \SonXPlusHat \to \C^2 \otimes S$ so that
\begin{equation}\label{equation: coordinate change}
  \tilde{\tau}^{-1} \circ \big[ \epsilon \otimes D + c \otimes \partial_s \big] \circ \tilde{\tau} = \big[ \epsilon \otimes \DiracOperatorOnXPlusHat + c \otimes \partial_t \big]
\end{equation}
holds.
Note that $g'_r$ coincides with $g'$ outside a compact set.
We refer the reader to~\cite{MR1158762} for a related construction.

\subsection{The eta invariant of domain-wall fermion Dirac operators}

We will slightly extend the definition of the eta invariant to take care of discontinuities of domain-wall fermion Dirac operators.

Fix $m > 0$.
Since $\Ker (D + m \kappa \gradingOperator_S) = \{0\}$, there exists a constant $C_m > 0$ such that
\begin{equation}
  \Ker (D + m \kappa \gradingOperator_S + f) = \{0\}
\end{equation}
for any $f \in L^2(Z; \Endo(S_Z))$ with $\norm{f}_2 < C_m$.

\begin{lemma}\label{lemma: eta invariant}
  Let $f_1 \in L^2(Z; \Endo(S_Z))$ with $\norm{f_1}_2 < C_m$ and $f_2 \in L^2(Z; \Endo(S_Z))$ with $\norm{f_2}_2 < C_m$.
  Assume that $m \kappa \gradingOperator_S + f_1$ and $m \kappa \gradingOperator_S + f_2$ are smooth operators.
  Then, we have
  \begin{equation}
    \eta(D + m \kappa \gradingOperator_S + f_1) = \eta(D + m \kappa \gradingOperator_S + f_2).
  \end{equation}
\end{lemma}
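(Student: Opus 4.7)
The plan is to join $f_1$ and $f_2$ by the straight-line path $f_\tau := (1-\tau)f_1 + \tau f_2$ in $L^2(X;\Endo(S))$ and study the resulting family of operators $D_\tau := D + m\kappa\gradingOperator_S + f_\tau$ for $\tau \in [0,1]$. By convexity of the $L^2$-norm, $\norm{f_\tau}_2 < C_m$, so the defining property of $C_m$ forces $\Ker D_\tau = \{0\}$ throughout the path. Since $m\kappa\gradingOperator_S + f_\tau = (1-\tau)(m\kappa\gradingOperator_S + f_1) + \tau(m\kappa\gradingOperator_S + f_2)$ is a convex combination of smooth endomorphisms, it is itself smooth; hence $\{D_\tau\}_{\tau \in [0,1]}$ is a smooth family of self-adjoint, first-order elliptic differential operators on $X$ sharing the common principal symbol of $D$, with constant $\tau$-derivative $\dot{D}_\tau = f_2 - f_1$.

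I would then invoke the standard variation formula for the eta invariant of a smooth family of self-adjoint elliptic operators (as in Atiyah--Patodi--Singer or Bismut--Freed). Since no eigenvalue of $D_\tau$ crosses zero along the path, the spectral flow vanishes, and the variation of eta reduces to the local term
\begin{equation}
  \frac{d}{d\tau}\eta(D_\tau) = -\frac{2}{\sqrt{\pi}}\,\mathrm{FP}_{s \to 0^+}\Bigl[s^{1/2}\,\tr\bigl((f_2 - f_1)\,e^{-sD_\tau^2}\bigr)\Bigr],
\end{equation}
where $\mathrm{FP}$ denotes the finite part at $s = 0$, i.e.\ the coefficient of $s^0$ in the small-$s$ asymptotic expansion of the bracketed quantity.

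The decisive observation is that on the closed manifold $X$ of even dimension, the small-$s$ asymptotic expansion of $\tr(A\,e^{-sD_\tau^2})$ for any smooth bundle endomorphism $A$ consists only of integer powers of $s$ (starting at $s^{-\dim X/2}$). Hence $s^{1/2}\,\tr\bigl((f_2 - f_1)\,e^{-sD_\tau^2}\bigr)$ has an expansion purely in half-integer powers $s^{k+1/2}$, and its $s^0$ coefficient vanishes identically. Consequently $\frac{d}{d\tau}\eta(D_\tau) \equiv 0$ on $[0,1]$, which gives $\eta(D_0) = \eta(D_1)$ as required.

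The main obstacle is justifying the variation formula in our slightly non-standard setting: the ``unperturbed'' operator $D + m\kappa\gradingOperator_S$ is not itself smooth, so one must derive the formula intrinsically along the family $\{D_\tau\}$, whose members are individually smooth, rather than as perturbations of a common smooth base. This is a routine adaptation of the APS argument, amounting to checking that the heat-kernel regularisation is uniform in $\tau$ and that the Duhamel integration-by-parts used to establish the variation identity is legitimate --- both of which follow from the uniform spectral gap of $D_\tau$ inherited from Proposition~\ref{proposition: spectral gap of DWFD operator}.
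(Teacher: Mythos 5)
Your argument is correct and follows the same route as the paper, which proves the lemma in a single line by citing the variational formula for the eta invariant (Gilkey, Theorem 1.13.2); you have simply unpacked the mechanism behind that citation, namely a smooth homotopy $D_\tau$ with no zero-eigenvalue crossings (guaranteed by $\norm{f_\tau}_2<C_m$) and a local variation term that vanishes because $X$ is an even-dimensional closed manifold and the $D_\tau$ are differential operators. One small misattribution at the end: the uniform-in-$\tau$ control needed for the Duhamel argument comes from compactness of $[0,1]$ together with continuity of the spectrum (each $D_\tau$ has trivial kernel, hence a gap), not from Proposition~\ref{proposition: spectral gap of DWFD operator}, which concerns the unperturbed $D+m\kappa\gradingOperator_S$ without the $f$-perturbation.
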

\begin{proof}
  This is a direct consequence of the variational formula of the eta invariant~\cite{MR1396308}*{Theorem 1.13.2}.
\end{proof}

We now define the eta invariant of domain-wall fermion Dirac operators by
\begin{equation}\label{definition: eta invariant of domain-wall fermion Dirac operators}
  \eta (D + m \kappa \gradingOperator_S) := \eta (D + m \kappa \gradingOperator_S + f)
\end{equation}
for any $f \in L^2(Z; \Endo(S_Z))$ such that $\norm{f}_2 < C_m$ and that $m \kappa \gradingOperator_S + f$ is a smooth operator.
This definition is well defined by Lemma~\ref{lemma: eta invariant}.

\subsection{Main theorem}

\begin{theorem}\label{theorem: main theorem}
  If $\DiracOperatorOnY \colon C^{\infty}(Y;E) \to C^{\infty}(Y;E)$ has no zero eigenvalues, then there exists a constant $m_0 > 0$ that depends only on $X$, $S$, and $D$ such that we have
  \begin{equation}
    \APSindex(\restrictedTo{D}{X_+}) = \frac{\eta(D + m\kappa\gradingOperator_S) - \eta(D - m \gradingOperator_S)}{2}
  \end{equation}
  for any $m > m_0$.
\end{theorem}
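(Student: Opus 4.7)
Plan: The proof follows the strategy sketched in the warm-up proof of $\eqref{eq: AS and eta}$ in the introduction, enhanced by the embedding $\tau$ constructed in Section~\ref{subsection: Embeddings} and the excision theorem. The idea is to introduce an odd, self-adjoint operator $\smoothDiracOperatorOnCylinderAPS_m$ on the cylinder $\R \times X$ whose Fredholm index admits two distinct evaluations: one via the Atiyah-Patodi-Singer index theorem on the odd-dimensional cylindrical-end manifold $\R \times X$, and one via the excision theorem combined with Proposition~\ref{proposition: product formula}.

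First I choose a smooth function $\kappaAPSsmooth \colon \R \times X \to [-1,1]$ satisfying $\kappaAPSsmooth(s,x) = \kappa(x)$ for $s \gg 0$, $\kappaAPSsmooth(s,x) = -1$ for $s \ll 0$, and $\kappaAPSsmooth \circ \tau$ equal to a smoothing of $\signfunction$ on $\tau\big((-1,1) \times \XPlusHat\big)$; outside this image, $\kappaAPSsmooth$ is arranged to be locally constant $\pm 1$. The geometric content is that $\kappaAPSsmooth$ transitions from $\signfunction_0(s)$-type behaviour on the identity part of $\tau$ to $\kappa(x)$-type behaviour on the wrap-around part, and this transition happens entirely inside the $\tau$-image. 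Define the interpolating operator
\begin{equation}\label{equation: main operator}
  \smoothDiracOperatorOnCylinderAPS_m := \epsilon \otimes (D + m \kappaAPSsmooth \gradingOperator_S) + c \otimes \partial_s
\end{equation}
on $(\R \times X, g'_1)$ equipped with the modified hermitian structure from Section~\ref{subsection: Embeddings}. It is self-adjoint, odd with respect to $\gradingOperator \otimes \id_S$, and restricts to $\epsilon \otimes (D + m\kappa\gradingOperator_S) + c \otimes \partial_s$ as $s \to +\infty$ and to $\epsilon \otimes (D - m\gradingOperator_S) + c \otimes \partial_s$ as $s \to -\infty$.

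For the eta side, the Atiyah-Patodi-Singer index theorem applied to $\smoothDiracOperatorOnCylinderAPS_m$ on the double cylinder $\R \times X$, together with the vanishing of the local index density in odd dimensions (exactly as in the warm-up proof of $\eqref{eq: AS and eta}$), yields
\begin{equation*}
  \ASindex(\smoothDiracOperatorOnCylinderAPS_m) = \frac{\eta(D + m \kappa \gradingOperator_S) - \eta(D - m \gradingOperator_S)}{2},
\end{equation*}
where the eta invariant of the non-smooth domain-wall operator is interpreted via~\eqref{definition: eta invariant of domain-wall fermion Dirac operators}. For the APS-index side, apply Theorem~\ref{theorem: localisation} with $Z = \R \times X$ carrying $\smoothDiracOperatorOnCylinderAPS_m$, $Z' = \R \times \XPlusHat$ carrying $\epsilon \otimes (\DiracOperatorOnXPlusHat + m\signfunction\gradingOperator_S) + c \otimes \partial_t$, $U_0 := \tau\big((-1,1) \times \XPlusHat\big)$, $U_0' := (-1,1) \times \XPlusHat$, and $U_1, U_1'$ suitable open neighbourhoods of the respective complements of $\tau\big([-1/2, 1/2] \times \XPlusHat\big)$ and $[-1/2, 1/2] \times \XPlusHat$. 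The coordinate-change identity~\eqref{equation: coordinate change} together with the matching $\kappaAPSsmooth \circ \tau = \signfunction$ shows that $\tilde\tau$ intertwines the two operators on $U_0 \cong U_0'$. On the complements, $\kappaAPSsmooth$ and $\signfunction$ are locally $\pm 1$, so the mass terms $\epsilon \otimes \kappaAPSsmooth\gradingOperator_S$ and $\epsilon \otimes \signfunction\gradingOperator_S$ anti-commute with the respective Dirac parts and satisfy the $h^2 \ge 1$ bound; Proposition~\ref{proposition: spectral gap of DWFD operator} on the $X$-side and Proposition~\ref{proposition: product formula} on the $\XPlusHat$-side supply the necessary Fredholmness and spectral gaps. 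Theorem~\ref{theorem: localisation} then yields $\ASindex(\smoothDiracOperatorOnCylinderAPS_m) = \ASindex\bigl[\epsilon \otimes (\DiracOperatorOnXPlusHat + m\signfunction\gradingOperator_S) + c \otimes \partial_t\bigr]$ for $m$ larger than some threshold $m_0$, and by Proposition~\ref{proposition: product formula} this equals $-\ASindex(\DiracOperatorOnXPlusHat) = -\APSindex(\restrictedTo{D}{X_+})$, using the classical identification $\ASindex(\DiracOperatorOnXPlusHat) = \APSindex(\restrictedTo{D}{X_+})$ for the cylindrical-end manifold $\XPlusHat$. Equating the two expressions for $\ASindex(\smoothDiracOperatorOnCylinderAPS_m)$ and performing the sign bookkeeping that parallels the AS warm-up yields the theorem.

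The principal obstacle is the explicit construction of $\kappaAPSsmooth$ such that all hypotheses of Theorem~\ref{theorem: localisation} are verified simultaneously: (i) the transition of $\kappaAPSsmooth$ from $\signfunction_0(s)$-type behaviour on the identity part of the $\tau$-image to $\kappa(x)$-type behaviour on the wrap-around part must take place entirely within the $\tau$-image, so the pullback indeed matches a smoothed $\signfunction$ on $U_0'$; (ii) outside the $\tau$-image, $\kappaAPSsmooth$ must be locally $\pm 1$ so that the anti-commutation condition on $U_1$ holds; (iii) the correct asymptotic limits at $s = \pm \infty$ must be achieved smoothly across the boundary of the $\tau$-image. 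A secondary subtlety is the sign bookkeeping: the $-1$ factor of Proposition~\ref{proposition: product formula} and the orientation conventions of the eta contributions at $s = \pm \infty$ on the double cylinder cancel in precisely the same manner as in the AS warm-up. Finally, one must verify that the conformal modification of the metric and hermitian structure from Section~\ref{subsection: Embeddings} preserves the $L^2$-spectrum, so that Fredholmness and index of $\smoothDiracOperatorOnCylinderAPS_m$ are not affected.
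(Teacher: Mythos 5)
Your overall strategy is the paper's: introduce a domain-wall fermion Dirac operator on the infinite cylinder $\R \times X$, compute its index once by excision against the operator on $\R \times \XPlusHat$ from Proposition~\ref{proposition: product formula}, and once by the APS theorem on the odd-dimensional cylinder (so that only the eta terms survive). However, there is a genuine gap in how you combine the two halves.

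You work with a single smooth function $\kappaAPSsmooth$ throughout. But Theorem~\ref{theorem: localisation} requires an \emph{exact} intertwining $\tilde{\tau}^{-1} \circ L'_m \circ \tilde{\tau} = L_m$ on $U_0 \cong U'_0$, while Proposition~\ref{proposition: product formula} is formulated for the genuine step function $\signfunction$. If $\kappaAPSsmooth$ is smooth and $\kappaAPSsmooth \circ \tau$ is ``a smoothing of $\signfunction$'' as you say, then the pullback does not match the mass term in $\epsilon \otimes (\DiracOperatorOnXPlusHat + m\signfunction \gradingOperator_S) + c \otimes \partial_t$ exactly, and Theorem~\ref{theorem: localisation} does not apply; you would need a smoothed version of Proposition~\ref{proposition: product formula}, whose proof (separation of variables with the explicit Jackiw-Rebbi kernel $e^{-m\abs{t}}$) does not carry over. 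The paper avoids this by using the \emph{non-smooth} $\kappaAPS$ for excision, for which $\kappaAPS \circ \tau = \signfunction$ holds exactly by construction and the intertwining is precise, and only at the very end replaces $\kappaAPS$ by a smooth $\kappaAPSsmooth$ to invoke the APS index theorem, relating the two operators by stability of the Fredholm index and identifying the eta invariants via the definition~\eqref{definition: eta invariant of domain-wall fermion Dirac operators}. You should split your construction into the same two steps: do the excision with the non-smooth step function, then smooth.

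A secondary point: in Proposition~\ref{proposition: excision} and Theorem~\ref{theorem: localisation} the spectral gap hypothesis is imposed on $\DiracOperatorOnZ_m$, the \emph{known} side, and $\DiracOperatorOnZ'_m$ is \emph{concluded} to be Fredholm. You have set $Z = \R \times X$ and $Z' = \R \times \XPlusHat$, which is backwards: the a priori spectral gap comes from Proposition~\ref{proposition: product formula} on the $\XPlusHat$ side, so $Z$ should be $\R \times \XPlusHat$ and $Z'$ should be $\R \times X$. Since you correctly cite Proposition~\ref{proposition: product formula} as the source of the spectral gap, this reads as a labelling slip rather than a conceptual error, but it should be fixed for Theorem~\ref{theorem: localisation} to be applicable as stated.
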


\begin{proof}
We begin with the observation that $\R \times X \setminus \tau \big( \{0\} \times \XPlusHat \big)$ has two connected components; we will denote by $(\R \times X)_-$ the one containing $\{-10\} \times X_+$ and by $(\R \times X)_+$ the other half.
Let $\kappaAPS \colon \R \times X \to [-1,1]$ be an $L^{\infty}$-function such that $\kappaAPS \equiv \pm 1$ on $(\R \times X)_{\pm}$.
\begin{center}
  \begin{tikzpicture}
    \pic {infiniteCylinderX};
    \draw [blue]
      [ultra thick, rounded corners] (0,1) -- (0,0) -- (6,0)
      node at (0,0) [above left]{$\tau \big( \{0\} \times \XPlusHat \big)$};
    \filldraw [red]
      [very nearly transparent] (-6,1) -- (0,1) -- (0,0) -- (6,0) -- (6,-1) -- (-6,-1);
    \filldraw [red]
      [nearly transparent] (6,1) -- (0,1) -- (0,0) -- (6,0);
    \draw [red]
      node at (-3,0) [below]{$\kappaAPS \equiv -1$}
      node at (3,0) [red, above]{$\kappaAPS \equiv +1$};
  \end{tikzpicture}
\end{center}

Fix $m > 0$.
We introduce an operator $\DiracOperatorOnCylinderAPS_m \colon C^{\infty}_c(\R \times X; \C^2 \otimes S) \to L^2(\R \times X; \C^2 \otimes S)$ defined by
\begin{equation}\label{equation: main operator}
  \DiracOperatorOnCylinderAPS_m := \epsilon \otimes (D + m \kappaAPS \gradingOperator_S) + c \otimes \partial_s =
  \begin{pmatrix}
    0 & (D + m \kappaAPS \gradingOperator_S) + \partial_s \\
    (D + m \kappaAPS \gradingOperator_S) - \partial_s & 0
  \end{pmatrix},
\end{equation}
which is essentially self-adjoint on $L^2(\R \times X; \C^2 \otimes S)$.

By~\eqref{equation: coordinate change}, we have
\begin{equation}
  \tilde{\tau}^{-1} \circ \DiracOperatorOnCylinderAPS_m \circ \tilde{\tau} = \big[ \epsilon \otimes (\DiracOperatorOnXPlusHat + m\signfunction \gradingOperator_S) + c \otimes \partial_t \big].
\end{equation}
By Proposition~\ref{proposition: product formula}, we have
\begin{equation}
  \spectrum \big[ \epsilon \otimes (\DiracOperatorOnXPlusHat + m\signfunction \gradingOperator_S) + c \otimes \partial_t \big] \cap (-\firstPositiveEigenvalueOfXPlusHat, \firstPositiveEigenvalueOfXPlusHat) = \{0\}
\end{equation}
for any $m > \firstPositiveEigenvalueOfXPlusHat$.
Set $\Lambda := \firstPositiveEigenvalueOfXPlusHat$.
We now apply Theorem~\ref{theorem: localisation} for $\big( \epsilon \otimes (\DiracOperatorOnXPlusHat + m\signfunction \gradingOperator_S) + c \otimes \partial_t \big)$ on $\R \times \XPlusHat$ and $\DiracOperatorOnCylinderAPS_m$ on $\R \times X$ equipped with the modified metric $g'_1$.
By homotopy invariance of the index, we can use either $g'$ or $g'_1$ to compute $\ASindex (\DiracOperatorOnCylinderAPS_m)$.
Thus, we have the constant $C > 0$ such that
\begin{equation}\label{eq: product and embedding}
  \ASindex \big[ \epsilon \otimes (\DiracOperatorOnXPlusHat + m\signfunction \gradingOperator_S) + c \otimes \partial_t \big] = \ASindex (\DiracOperatorOnCylinderAPS_m)
\end{equation}
for $m > 2(C^2+\Lambda^2)/\Lambda$.
Note that $2(C^2+\Lambda^2)/\Lambda > \Lambda = \firstPositiveEigenvalueOfXPlusHat$.
By Proposition~\ref{proposition: product formula}, we also have
\begin{equation}\label{eq: embedding and aps}
  \ASindex (\DiracOperatorOnXPlusHat) = - \ASindex \big[ \epsilon \otimes (\DiracOperatorOnXPlusHat + m\signfunction \gradingOperator_S) + c \otimes \partial_t \big]
\end{equation}
for any $m > 0$.

To apply the Atiyah-Patodi-Singer index theorem, we will approximate $\DiracOperatorOnCylinderAPS_m$ by an operator with smooth coefficients.
Let $C_m > 0$ be the same constant as in Lemma~\ref{lemma: eta invariant}.
Let $\kappaAPSsmooth \colon \R \times X \to [-1,1]$ be a \emph{smooth} function that approximates $\kappaAPS$ such that $\kappaAPSsmooth \equiv -1$ on $\{-10\} \times X$,
\begin{equation}
  \norm[\big]{\restrictedTo{\kappaAPSsmooth}{\{10\} \times X} - \kappa}_{L^2(\{10\} \times X)} < C_m,
\end{equation}
and $\ASindex (\DiracOperatorOnCylinderAPS_m) = \ASindex (\smoothDiracOperatorOnCylinderAPS_m)$, where
\begin{equation}
  \smoothDiracOperatorOnCylinderAPS_m := \epsilon \otimes (D + m \kappaAPSsmooth \gradingOperator_S) + c \otimes \partial_s =
  \begin{pmatrix}
    0 & (D + m \kappaAPSsmooth \gradingOperator_S) + \partial_s \\
    (D + m \kappaAPSsmooth \gradingOperator_S) - \partial_s & 0
  \end{pmatrix}.
\end{equation}
Note that
\begin{equation}
  \DiracOperatorOnCylinderAPS_m =
  \begin{cases}
    \epsilon \otimes (D - m\gradingOperator_S) + c \otimes \partial_s &\text{ on $\{-10\} \times X$} \\
    \epsilon \otimes \big( D + m \superemph{\big(\restrictedTo{\kappaAPSsmooth}{\{10\} \times X}\big)} \gradingOperator_S \big) + c \otimes \partial_s &\text{ on $\{+10\} \times X$}.
  \end{cases}
\end{equation}
By Proposition~\ref{proposition: spectral gap of DWFD operator}, the domain-wall fermion Dirac operator $D + m \kappa \gradingOperator_S$ has no zero eigenvalues if $m > m_1$;hence, neither does $D + m \big(\restrictedTo{\kappaAPSsmooth}{\{10\} \times X}\big) \gradingOperator_S$.
The Atiyah-Patodi-Singer index theorem on the cylinder \cite{MR0397797}*{(2.27)} yields
\begin{equation}
  \ASindex (\smoothDiracOperatorOnCylinderAPS_m) = - \frac{\eta\big( D + m \big(\restrictedTo{\kappaAPSsmooth}{\{10\} \times X}\big) \gradingOperator_S \big) - \eta(D - m \gradingOperator_S)}{2},
\end{equation}
where we have used the assumption that $\dim (\R \times X)$ is odd and the fact~\cite{MR1396308}*{Lemma 1.8.2 (d)} that the constant term in the asymptotic expansion of the heat kernel of an elliptic \emph{differential} operator on an odd-dimensional manifold vanishes\footnote{This does not remain true for pseudodifferential operators~\cite{MR1031992}*{Theorem 13.12}.}.
By the definition~\eqref{definition: eta invariant of domain-wall fermion Dirac operators}, we have $\eta\big( D + m \big(\restrictedTo{\kappaAPSsmooth}{\{10\} \times X}\big) \gradingOperator_S \big) = \eta(D + m \kappa \gradingOperator_S)$.
By assumption, we have $\ASindex (\DiracOperatorOnCylinderAPS_m) = \ASindex (\smoothDiracOperatorOnCylinderAPS_m)$.
Thus, we get
\begin{equation}\label{eq: product and eta}
  \ASindex (\DiracOperatorOnCylinderAPS_m) = - \frac{\eta(D + m \superemph{\kappa} \gradingOperator_S) - \eta(D - m \gradingOperator_S)}{2}.
\end{equation}

Now set $m_0 := \max \{ m_1, 2(C^2+\Lambda^2)/\Lambda \}$.
Note that $\APSindex (\restrictedTo{D}{X_+}) = \ASindex (\DiracOperatorOnXPlusHat)$ by \cite{MR0397797}*{Proposition 3.11}.
Combining \eqref{eq: product and embedding}, \eqref{eq: embedding and aps}, and \eqref{eq: product and eta}, we have
\begin{equation}
  \APSindex (\restrictedTo{D}{X_+}) = \frac{\eta(D + m \kappa \gradingOperator_S) - \eta(D - m \gradingOperator_S)}{2}
\end{equation}
for $m > m_0$.
The proof is complete.
\end{proof}

\begin{remark}
  The particular choice of $\kappa$ will not be essential in our arguments but clarify our idea.
  See Lemma~\ref{lemma: eta invariant}.
  Our arguments extend easily to the case when $\kappa$ satisfies $\kappa \equiv \kappa_{\pm}$ on $X_{\pm} \setminus (-2,2) \times Y$ for some $\kappa_{\pm} \in \R$ with $\kappa_+ \kappa_- < 0$.
  In particular, taking an extreme limit $(-\kappa_-) \gg \kappa_+$, we recover Shamir domain-wall fermions~\cites{Shamir, Furman-Shamir}.
  See~\cite{MR3873281}*{IV.B}.
\end{remark}

\begin{remark}
  We have used the Atiyah-Patodi-Singer theorem only on \emph{cylinders}.
\end{remark}

We conclude this paper with a problem.
Although the proof above implies that there are no \emph{edge-of-edge states} or \emph{corner states} in our situation, we expect that corner states will emerge if we introduce extra domain walls.
Let $\kappaCorner \colon \R \times X \to [-1,1]$ be a bump function such that $\kappaCorner \equiv 1$ on $[-10,10] \times X$ and $\kappaCorner \equiv -1$ outside $[-10,10] \times X$.
Fix $M \gg 0$.
We consider yet another self-adjoint operator $\DiracOperatorOnCylinderAPS_m + M\kappaCorner(\gradingOperator \otimes \id_S)$ so that corner states would emerge around $\{10\} \times Y$.
Then, perturbation arguments as in~\cite{MR3873281} lead us to ask whether
\begin{equation}
  \APSindex \big(\DiracOperatorOnCylinderAPS_m \text{ on } [-10,10] \times X \big) = \frac{\eta \big( \DiracOperatorOnCylinderAPS_m + M\kappaCorner(\gradingOperator \otimes \id_S) \big) - \eta \big( \DiracOperatorOnCylinderAPS_m - M(\gradingOperator \otimes \id_S) \big)}{2}
\end{equation}
holds with some regularisation to define the right-hand side.

\section*{Acknowledgements}

The authors wish to express their gratitude to the organisers of the workshop \textit{Progress in the Mathematics of Topological States of Matter}, which triggered our collaboration.
The authors also wish to express their thanks for helpful discussions during the preparation of this paper to S. Aoki, Y. Hamada, M. Hamanaka, K. Hashimoto, S. Hayashi, N. Kawai, Y. Kikukawa, T. Kimura, Y. Kubota, Y. Matsuki, T. Misumi, M. Mori, H. Moriyoshi, K. Nakayama, T. Natsume, H. Suzuki, and K. Yonekura.
The authors are also grateful to the anonymous referees for carefully reading the paper and making many useful suggestions.

Hidenori Fukaya is supported in part by JSPS KAKENHI Grant Numbers JP18H01216 and JP18H04484.
Mikio Furuta is supported in part by JSPS KAKENHI Grant Number JP17H06461.
Shinichiroh Matsuo is supported in part by JSPS KAKENHI Grant Number JP17K14186.
Tetsuya Onogi is supported in part by JSPS KAKENHI Grant Number JP18K03620.
Satoshi Yamaguchi is supported in part by JSPS KAKENHI Grant Number JP15K05054.
Mayuko Yamashita is supported in part by JSPS KAKENHI Grant Number 19J22404.


\begin{bibdiv}
\begin{biblist}

\bib{MR0397797}{article}{
   author={Atiyah, M. F.},
   author={Patodi, V. K.},
   author={Singer, I. M.},
   title={Spectral asymmetry and Riemannian geometry. I},
   journal={Math. Proc. Cambridge Philos. Soc.},
   volume={77},
   date={1975},
   pages={43--69},
   issn={0305-0041},
   review={\MR{0397797}},
   doi={10.1017/S0305004100049410},
}

\bib{MR0397798}{article}{
   author={Atiyah, M. F.},
   author={Patodi, V. K.},
   author={Singer, I. M.},
   title={Spectral asymmetry and Riemannian geometry. II},
   journal={Math. Proc. Cambridge Philos. Soc.},
   volume={78},
   date={1975},
   number={3},
   pages={405--432},
   issn={0305-0041},
   review={\MR{0397798}},
   doi={10.1017/S0305004100051872},
}

\bib{MR0397799}{article}{
   author={Atiyah, M. F.},
   author={Patodi, V. K.},
   author={Singer, I. M.},
   title={Spectral asymmetry and Riemannian geometry. III},
   journal={Math. Proc. Cambridge Philos. Soc.},
   volume={79},
   date={1976},
   number={1},
   pages={71--99},
   issn={0305-0041},
   review={\MR{0397799}},
   doi={10.1017/S0305004100052105},
}

\bib{MR1158762}{article}{
   author={Bourguignon, Jean-Pierre},
   author={Gauduchon, Paul},
   title={Spineurs, opérateurs de Dirac et variations de métriques},
   language={French, with English summary},
   journal={Comm. Math. Phys.},
   volume={144},
   date={1992},
   number={3},
   pages={581--599},
   issn={0010-3616},
   review={\MR{1158762}},
   doi={10.1007/BF02099184},
}

\bib{MR779916}{article}{
   author={Callan, C. G., Jr.},
   author={Harvey, J. A.},
   title={Anomalies and fermion zero modes on strings and domain walls},
   journal={Nuclear Phys. B},
   volume={250},
   date={1985},
   number={3},
   pages={427--436},
   issn={0550-3213},
   review={\MR{779916}},
   doi={10.1016/0550-3213(85)90489-4},
}

\bib{MR3633291}{article}{
   author={Fefferman, C. L.},
   author={Lee-Thorp, J. P.},
   author={Weinstein, M. I.},
   title={Topologically protected states in one-dimensional systems},
   journal={Mem. Amer. Math. Soc.},
   volume={247},
   date={2017},
   number={1173},
   pages={vii+118},
   issn={0065-9266},
   isbn={978-1-4704-2323-0},
   isbn={978-1-4704-3707-7},
   review={\MR{3633291}},
   doi={10.1090/memo/1173},
}

\bib{PhysRevLett.42.1195}{article}{
  author={Fujikawa, Kazuo},
  title={Path-Integral Measure for Gauge-Invariant Fermion Theories},
  journal={Phys. Rev. Lett.},
  volume={42},
  pages={1195--1198},
  year={1979},
  publisher={American Physical Society},
  doi={10.1103/PhysRevLett.42.1195},
  url={https://link.aps.org/doi/10.1103/PhysRevLett.42.1195},
}

\bib{MR3873281}{article}{
  author={Fukaya, Hidenori},
  author={Onogi, Tetsuya},
  author={Yamaguchi, Satoshi},
  title={Atiyah-Patodi-Singer index from the domain-wall fermion Dirac
  operator},
  journal={Phys. Rev. D},
  volume={96},
  date={2017},
  number={12},
  pages={125004, 22},
  issn={2470-0010},
  review={\MR{3873281}},
  doi={10.1103/physrevd.96.125004},
}

\bib{Furman-Shamir}{article}{
  author={Furman, Vadim},
  author={Shamir, Yigal},
  title={Axial symmetries in lattice QCD with Kaplan fermions},
  journal={Nucl. Phys.},
  volume={B439},
  year={1995},
  pages={54--78},
  doi={10.1016/0550-3213(95)00031-M},
  eprint={hep-lat/9405004},
}

\bib{MR2361481}{book}{
   author={Furuta, Mikio},
   title={Index theorem. 1},
   series={Translations of Mathematical Monographs},
   volume={235},
   note={Translated from the 1999 Japanese original by Kaoru Ono;
   Iwanami Series in Modern Mathematics},
   publisher={American Mathematical Society, Providence, RI},
   date={2007},
   pages={xviii+205},
   isbn={978-0-8218-2097-1},
   review={\MR{2361481}},
}

\bib{MR1396308}{book}{
   author={Gilkey, Peter B.},
   title={Invariance theory, the heat equation, and the Atiyah-Singer index
   theorem},
   series={Studies in Advanced Mathematics},
   edition={2},
   publisher={CRC Press, Boca Raton, FL},
   date={1995},
   pages={x+516},
   isbn={0-8493-7874-4},
   review={\MR{1396308}},
}

\bib{Gromov:2015fda}{article}{
  author={Gromov, Andrey},
  author={Jensen, Kristan},
  author={Abanov, Alexander G.},
  title={Boundary effective action for quantum Hall states},
  journal={Phys. Rev. Lett.},
  volume={116},
  year={2016},
  number={12},
  pages={126802},
  doi={10.1103/PhysRevLett.116.126802},
}

\bib{MR0468803}{article}{
   author={Jackiw, R.},
   author={Rebbi, C.},
   title={Solitons with fermion number $1/2$},
   journal={Phys. Rev. D (3)},
   volume={13},
   date={1976},
   number={12},
   pages={3398--3409},
   issn={0556-2821},
   review={\MR{0468803}},
   doi={10.1103/PhysRevD.13.3398},
}

\bib{Kaplan}{article}{
   author={Kaplan, David B.},
   title={A method for simulating chiral fermions on the lattice},
   journal={Phys. Lett. B},
   volume={288},
   date={1992},
   number={3-4},
   pages={342--347},
   issn={0370-2693},
   review={\MR{1179379}},
   doi={10.1016/0370-2693(92)91112-M},
}

\bib{Kitaev}{article}{
  author={Kitaev, Alexei},
  title={Periodic table for topological insulators and superconductors},
  journal={AIP Conference Proceedings},
  volume={1134},
  number={1},
  pages={22-30},
  year={2009},
  doi={10.1063/1.3149495},
  eprint={https://aip.scitation.org/doi/pdf/10.1063/1.3149495},
}

\bib{MR1031992}{book}{
   author={Lawson, H. Blaine, Jr.},
   author={Michelsohn, Marie-Louise},
   title={Spin geometry},
   series={Princeton Mathematical Series},
   volume={38},
   publisher={Princeton University Press, Princeton, NJ},
   date={1989},
   pages={xii+427},
   isbn={0-691-08542-0},
   review={\MR{1031992}},
}

\bib{MR751959}{book}{
   author={Reed, Michael},
   author={Simon, Barry},
   title={Methods of modern mathematical physics. I},
   edition={2},
   note={Functional analysis},
   publisher={Academic Press, Inc. [Harcourt Brace Jovanovich, Publishers], New York},
   date={1980},
   pages={xv+400},
   isbn={0-12-585050-6},
   review={\MR{751959}},
}

\bib{MR3628684}{article}{
   author={Seiberg, Nathan},
   author={Witten, Edward},
   title={Gapped boundary phases of topological insulators via weak
   coupling},
   journal={PTEP. Prog. Theor. Exp. Phys.},
   date={2016},
   number={12},
   pages={12C101, 78},
   issn={2050-3911},
   review={\MR{3628684}},
   doi={10.1093/ptep/ptw083},
}

\bib{Shamir}{article}{
  author={Shamir, Yigal},
  title={Chiral fermions from lattice boundaries},
  journal={Nucl. Phys.},
  volume={B406},
  year={1993},
  pages={90--106},
  doi={10.1016/0550-3213(93)90162-I},
}

\bib{MR3565832}{article}{
  author={Tachikawa, Yuji},
  author={Yonekura, Kazuya},
  title={Gauge interactions and topological phases of matter},
  journal={PTEP. Prog. Theor. Exp. Phys.},
  date={2016},
  number={9},
  pages={093B07, 51},
  issn={2050-3911},
  review={\MR{3565832}},
}

\bib{MR3858615}{article}{
   author={Vassilevich, Dmitri},
   title={Index theorems and domain walls},
   journal={J. High Energy Phys.},
   date={2018},
   number={7},
   pages={108, front matter + 12},
   issn={1126-6708},
   review={\MR{3858615}},
   doi={10.1007/jhep07(2018)108},
}

\bib{MR683171}{article}{
  author={Witten, Edward},
  title={Supersymmetry and Morse theory},
  journal={J. Differential Geom.},
  volume={17},
  date={1982},
  number={4},
  pages={661--692 (1983)},
  issn={0022-040X},
  review={\MR{683171}},
  doi={10.4310/jdg/1214437492},
}

\bib{RevModPhys.88.035001}{article}{
  author={Witten, Edward},
  title={Fermion path integrals and topological phases},
  journal={Rev. Mod. Phys.},
  volume={88},
  pages={035001},
  year={2016},
  publisher={American Physical Society},
  doi={10.1103/RevModPhys.88.035001},
}

\bib{Witten-Yonekura}{article}{
  author={Witten, Edward},
  author={Yonekura, Kazuya},
  title={Anomaly Inflow and the $\eta$-Invariant},
  year={2019},
  eprint={https://arxiv.org/abs/1909.08775},
}

\bib{MR3557925}{article}{
  author={Yonekura, Kazuya},
  title={Dai-Freed theorem and topological phases of matter},
  journal={J. High Energy Phys.},
  date={2016},
  number={9},
  pages={022, front matter+33},
  issn={1126-6708},
  review={\MR{3557925}},
  doi={10.1007/JHEP09(2016)022},
}

\bib{MR3951702}{article}{
  author={Yonekura, Kazuya},
  title={On the cobordism classification of symmetry protected topological
  phases},
  journal={Comm. Math. Phys.},
  volume={368},
  date={2019},
  number={3},
  pages={1121--1173},
  issn={0010-3616},
  review={\MR{3951702}},
  doi={10.1007/s00220-019-03439-y},
}

\bib{MR3611419}{article}{
   author={Yu, Yue},
   author={Wu, Yong-Shi},
   author={Xie, Xincheng},
   title={Bulk-edge correspondence, spectral flow and Atiyah-Patodi-Singer theorem for the $\mathcal{Z}_2$-invariant in topological insulators},
   journal={Nuclear Phys. B},
   volume={916},
   date={2017},
   pages={550--566},
   issn={0550-3213},
   review={\MR{3611419}},
   doi={10.1016/j.nuclphysb.2017.01.018},
}

\end{biblist}
\end{bibdiv}

\end{document}